\renewcommand\thesection{\arabic{section}}
\renewcommand\theequation{\thesection.\arabic{equation}}
\newenvironment{bemerkung}{\refstepcounter{equation}\textbf{Remark \theequation.}}{}
\newenvironment{definition/proposition}{\refstepcounter{equation}\textbf{Definition/Proposition \theequation.}}{}
\newenvironment{definition/proposition 2}{\refstepcounter{equation}\textbf{Definition/Proposition \theequation} (vgl. z.B. \cite{pw})\textbf{.}}{}
\newtheorem{satz}[equation]{Theorem}
\newtheorem{lemma}[equation]{Lemma}
\newtheorem{korollar}[equation]{Corollary}
\newtheorem{proposition}[equation]{Proposition}
\newcommand{\RR}{\mathbb{R}}
\newcommand{\NN}{\mathbb{N}}
\newcommand{\ZZ}{\mathbb{Z}}
\newcommand{\IInc}{\mathrm{Inc}}
\newcommand{\iid}{\mathrm{id}}
\newcommand{\iin}{\mathrm{in}}
\newcommand{\IInd}{\mathrm{Ind}}
\newcommand{\lcm}{\mathrm{lcm}}
\newcommand{\lt}{\mathrm{lt}}
\newcommand{\rank}{\mathrm{rank}}
\newcommand{\KK}{\mathbb{K}}
\newcommand{\DegMin}{\prec^{\mathrm{d}}_{\mathrm{min}}}
\newcommand{\RevdegMin}{\prec^{\mathrm{rd}}_{\mathrm{min}}}
\newcommand{\Min}{\prec_{\mathrm{min}}}
\newcommand{\Maxi}{\prec_{\mathrm{max,i}}}
\newcommand{\DegMaxi}{\prec^{\mathrm{d}}_{\mathrm{max,i}}}
\newcommand{\RevdegMaxi}{\prec^{\mathrm{rd}}_{\mathrm{max,i}}}
\newcommand{\Deg}{\prec^{\mathrm{d}}}
\newcommand{\MaxEins}{\prec_{\mathrm{max,1}}}
\newcommand{\MaxZwei}{\prec_{\mathrm{max,2}}}
\newcommand{\DegMaxZwei}{\prec^{\mathrm{d}}_{\mathrm{max,2}}}
\newcommand{\RevdegMaxZwei}{\prec^{\mathrm{rd}}_{\mathrm{max,2}}}
\begin{document}
\pagenumbering{arabic}

\title{Finite numbers of initial ideals in non-Noetherian polynomial rings}
\author{Felicitas Lindner}
\address{Universit\"at Marburg, Fachbereich Mathematik und Informatik,
35032 Marburg, Germany}
\email{lindner5@mathematik.uni-marburg.de}
\maketitle

\begin{abstract}In this article, we generalize the well-known result that ideals of Noetherian polynomial rings have only finitely many initial ideals to the situation of ascending ideal chains in non-Noetherian polynomial rings. More precisely, we study ideal chains in the polynomial ring $R=\KK[x_{i,j}\,|\,1\leq i\leq c,j\in\NN]$ that are invariant under the action of the monoid $\IInc(\NN)$ of strictly increasing functions on $\NN$, which acts on $R$ by shifting the second variable index. We show that for every such ideal chain, the number of initial ideal chains with respect to term orders on $R$ that are compatible with the action of $\IInc(\NN)$ is finite. As a consequence of this, we will see that $\IInc(\NN)$-invariant ideals of $R$ have only finitely many initial ideals with respect to $\IInc(\NN)$-compatible term orders. The article also addresses the question of how many such term orders exist. We give a complete list of the $\IInc(\NN)$-compatible term orders for the case $c=1$ and show that there are infinitely many for $c >1$. This answers a question by Hillar, Kroner, Leykin.
\end{abstract}

\section{Introduction}
It has long been known that for ideals of polynomial rings in finitely many variables, the number of initial ideals with respect to arbitrary term orders is finite (e.g. \cite{rob}, Lemma 2.6). As this result relies on the Noetherianity of such polynomial rings, it cannot be transferred to ideals of polynomial rings in infinitely many variables in general. However, more recent results show that for certain non-Noetherian polynomial rings, there are classes of ideals satisfying a weaker kind of Noetherianity, namely Noetherianity up to the action of certain monoids. Thus, it seems worthwhile to try and generalize the result on finiteness of numbers of initial ideals in the Noetherian case to this class of ideals.

Let $R:=\KK[x_{i,j}\,|\,i\in[c],j\in\NN]$ be the polynomial ring over an arbitrary field $\KK$ in the variables indexed by $[c]\times\NN$, where $\NN:=\lbrace 1,2,3,...\rbrace$ denotes the set of natural numbers, $c\in\NN$ is any fixed number and $[c]:=\lbrace 1,...,c\rbrace$. On $R$, we can define an action of the monoid
\begin{equation*}
\IInc(\NN):=\left\lbrace p:\NN\rightarrow\NN\,|\,p(n)<p(n+1)\text{ for all }n\in\NN\right\rbrace
\end{equation*}
of strictly increasing functions on $\NN$ by $\KK$-linear extension of the map
\begin{equation*}
x_{i_1,j_1}^{e_1}\cdot...\cdot x_{i_r,j_r}^{e_r}\mapsto p\cdot(x_{i_1,j_1}^{e_1}\cdot...\cdot x_{i_r,j_r}^{e_r}):=x_{i_1,p(j_1)}^{e_1}\cdot...\cdot x_{i_r,p(j_r)}^{e_r}
\end{equation*}
for every $p\in\IInc(\NN)$. Let $R_n:=\KK[x_{i,j}\,|\,i\in[c],j\in[n]]$ and
\begin{equation*}
\IInc(\NN)_{m,n}:=\left\lbrace p\in\IInc(\NN)\,|\,p(m)\leq n\right\rbrace
\end{equation*}
for each pair of natural numbers $m\leq n$. We call a sequence of ideals $J_{\circ}=J_1\subseteq J_2\subseteq...$, where each $J_n$ is an ideal of $R_n$, an $\IInc(\NN)$-invariant ideal chain in $R$ if for every $m\leq n$, we have
\begin{equation*}
\IInc(\NN)_{m,n}\cdot J_m\subseteq J_n.
\end{equation*}
In \cite{hs} it was shown that every $\IInc(\NN)$-invariant ideal chain $J_\circ$ in $R$ stabilizes up to the action of $\IInc(\NN)$, i.e. there is an index $N\in\NN$ satisfying
\begin{equation}
\left\langle\IInc(\NN)_{N,n}\cdot J_N\right\rangle_{R_n}=J_n
\label{intro: stability index}
\end{equation}
for every $n\geq N$. We call the minimal $N$ satisfying (\ref{intro: stability index}) the stability index of $J_\circ$ and denote it by $\IInd(J_\circ)$.

Let $\preceq$ be a term order on $R$, i.e. a total order on the monomials of $R$ respecting multiplication and satisfying $1\preceq f$ for every monomial $f$. If $\preceq$ has the additional property that
\begin{equation*}
f\preceq g\Rightarrow p\cdot f\preceq p\cdot g
\end{equation*}
for all monomials $f,g\in R$ and every $p\in\IInc(\NN)$, then we call $\preceq$ an $\IInc(\NN)$-compatible term order on $R$. If $\preceq$ is $\IInc(\NN)$-compatible, then for every polynomial $f\in R$, the leading monomial $\iin_\preceq(f)$ of $f$ with respect to $\preceq$ satisfies
\begin{equation*}
\iin_{\preceq}(p\cdot f)=p\cdot\iin_{\preceq}(f).
\end{equation*}
This implies that for every $\IInc(\NN)$-invariant ideal chain $J_{\circ}$ in $R$, the chain of initial ideals
\begin{equation*}
\iin_{\preceq}(J_\circ):=\iin_{\preceq}(J_1)\subseteq\iin_{\preceq}(J_2)\subseteq...
\end{equation*}
is $\IInc(\NN)$-invariant, too, and therefore stabilizes. Thus, we can define the set
\begin{equation*}
\mathrm{I}(J_\circ):=\left\lbrace\IInd(\iin_{\preceq}(J_\circ))\,|\,\preceq\text{ is an }\IInc(\NN)\text{-compatible term order on }R\right\rbrace
\end{equation*}
of stability indices of initial ideal chains of $J_\circ$ with respect to $\IInc(\NN)$-compatible term orders. In this article, we will prove the following statement:

\begin{satz}
For every $\IInc(\NN)$-invariant ideal chain $J_\circ$ in $R$, the set $\mathrm{I}(J_\circ)$ is bounded above (and, thus, finite).
\label{theorem: boundedness of stability indices}
\end{satz}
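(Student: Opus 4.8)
The plan is to prove the stronger statement that only finitely many distinct initial ideal chains $\iin_\preceq(J_\circ)$ occur as $\preceq$ ranges over the $\IInc(\NN)$-compatible term orders; boundedness and finiteness of $\mathrm{I}(J_\circ)$ are then immediate. The strategy is to topologize the set $\mathcal{T}$ of $\IInc(\NN)$-compatible term orders and show that the assignment $\preceq\mapsto\iin_\preceq(J_\circ)$ is locally constant on the compact space $\mathcal{T}$. First I would set up compactness: encoding a term order by its graph inside $\lbrace 0,1\rbrace^{\mathcal{M}\times\mathcal{M}}$, where $\mathcal{M}$ is the set of monomials of $R$, with the product topology, the ambient space is compact by Tychonoff, and each defining condition (totality, compatibility with multiplication, $1\preceq f$, and $\IInc(\NN)$-compatibility) is a closed condition depending coordinatewise on finitely many comparisons, so $\mathcal{T}$ is a compact (totally disconnected) space.

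Next I would fix $\preceq_0\in\mathcal{T}$, set $N_0:=\IInd(\iin_{\preceq_0}(J_\circ))$, and for each $n$ with $1\leq n\leq N_0$ choose finitely many elements of $J_n$ whose $\preceq_0$-leading monomials generate the monomial ideal $\iin_{\preceq_0}(J_n)$ (possible since each $R_n$ is Noetherian); let $G_0$ be their finite union. Using the stabilization identity $\iin_{\preceq_0}(J_n)=\langle\IInc(\NN)_{N_0,n}\cdot\iin_{\preceq_0}(J_{N_0})\rangle_{R_n}$ for $n\geq N_0$ together with $\IInc(\NN)$-compatibility, the $\preceq_0$-leading monomials of the orbit elements $p\cdot g$ (with $g\in G_0$ and $p\cdot g\in J_n$) generate $\iin_{\preceq_0}(J_n)$ for \emph{every} $n$; that is, $G_0$ is a finite equivariant Gröbner basis of $J_\circ$ for $\preceq_0$.

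I would then consider the clopen neighborhood $U:=\lbrace\preceq\in\mathcal{T}:\iin_\preceq(g)=\iin_{\preceq_0}(g)\text{ for all }g\in G_0\rbrace$ of $\preceq_0$, which is cut out by the finitely many coordinate comparisons among the finitely many monomials occurring in the elements of $G_0$. For $\preceq\in U$ and any $p$, $\IInc(\NN)$-compatibility of both orders gives $\iin_\preceq(p\cdot g)=p\cdot\iin_\preceq(g)=p\cdot\iin_{\preceq_0}(g)=\iin_{\preceq_0}(p\cdot g)$; since these orbit elements lie in $J_n$, their common leading monomials lie in $\iin_\preceq(J_n)$, whence $\iin_{\preceq_0}(J_n)\subseteq\iin_\preceq(J_n)$ for every $n$. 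The main obstacle is to upgrade this one-sided containment to an equality at all infinitely many levels simultaneously. This is exactly where the finiteness of the variable set of each $R_n$ is decisive: for the term order $\preceq$ restricted to $R_n$ the standard monomials form a $\KK$-basis of $R_n/J_n$, so $\iin_\preceq(J_n)$ and $\iin_{\preceq_0}(J_n)$ both have the Hilbert function of $J_n$, and two nested monomial ideals with equal Hilbert function coincide. Hence $\iin_\preceq(J_\circ)=\iin_{\preceq_0}(J_\circ)$ throughout $U$, so the chain is locally constant.

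Finally I would cover the compact space $\mathcal{T}$ by the neighborhoods $U(\preceq_0)$ and extract a finite subcover; since the initial ideal chain is constant on each member of the cover, only finitely many chains $\iin_\preceq(J_\circ)$ occur. In particular the set $\mathrm{I}(J_\circ)$ of their stability indices is finite, hence bounded above. I expect the delicate points to be purely of the two kinds already isolated: verifying that $\mathcal{T}$ and the sets $U$ are genuinely compact and clopen in the chosen topology, and justifying the level-wise Hilbert-function comparison that turns containment into equality; the equivariant Gröbner basis $G_0$ itself is furnished directly by the stabilization result quoted from \cite{hs}.
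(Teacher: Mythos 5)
Your proposal is correct, and it takes a genuinely different route from the paper. The paper argues by contradiction: from a sequence of $\IInc(\NN)$-compatible term orders with unbounded stability indices it extracts nested subsequences whose initial ideals agree at the doubling levels $N_i=2^iN_0$ while stabilization fails between consecutive levels; the technical core is Proposition \ref{konstanzphasen} (an equivariant $S$-polynomial argument showing that stabilization at a single doubling step $N\to 2N$ propagates to all higher levels), and the final contradiction comes from the well-partial-order property of $\IInc(\NN)$-divisibility (Lemma \ref{c mal NN: stabilit"at monomialer idealketten}). You instead prove directly that only finitely many chains $\iin_\preceq(J_\circ)$ occur --- statement (2) of Theorem \ref{weitere aspekte: charakterisierungen}, which the paper obtains only afterwards from Theorem \ref{theorem: boundedness of stability indices} --- via compactness of the space $\mathcal{T}$ of $\IInc(\NN)$-compatible term orders and local constancy of $\preceq\mapsto\iin_\preceq(J_\circ)$. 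All three of your key steps are sound: $\mathcal{T}$ is closed in the compact space $\lbrace 0,1\rbrace^{\mathcal{M}\times\mathcal{M}}$ since each defining condition constrains only finitely many coordinates; the stabilization identity together with $\iin_{\preceq_0}(p\cdot g)=p\cdot\iin_{\preceq_0}(g)$ makes your finite set $G_0$ an equivariant Gr\"obner basis, so that $\iin_{\preceq_0}(J_n)\subseteq\iin_\preceq(J_n)$ for all $n$ and all $\preceq$ in the clopen neighborhood $U$; and the passage from containment to equality is the standard fact that nested initial ideals of the same ideal coincide. (Your Hilbert-function phrasing is loose when $J_n$ is not homogeneous; what you actually need, and what holds for every term order restricted to the Noetherian ring $R_n$, is Macaulay's theorem that the standard monomials form a $\KK$-basis of $R_n/J_n$, after which containment-implies-equality is a two-line dimension count.) Note that both proofs lean on the stabilization theorem of \cite{hs}: you need it to know that $N_0=\IInd(\iin_{\preceq_0}(J_\circ))$ is finite in the first place. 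As for what each approach buys: yours bypasses the combinatorics of Section \ref{section: technicalities} (Lemmata \ref{inc-lemma allgemein}, \ref{c mal NN: stabilit"at monomialer idealketten} and Proposition \ref{konstanzphasen} are not needed) and delivers Theorem \ref{weitere aspekte: charakterisierungen}(2) in the same stroke; the paper's heavier argument yields Proposition \ref{konstanzphasen} as a by-product, a checkable certificate --- stabilization at one doubling step --- that bounds $\IInd(\iin_\preceq(J_\circ))$, something a non-effective compactness argument cannot offer.
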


Note that as the global stability index $\IInd(J_\circ)$ of the ideal chain $J_\circ$ can be smaller than $\max(\mathrm{I}(J_\circ))$ (see Remark \ref{example: global index smaller than maximum}), the perhaps obvious idea to prove Theorem \ref{theorem: boundedness of stability indices} by showing that $\mathrm{I}(J_\circ)$ is bounded by $\IInd(J_\circ)$ must fail. Therefore, we have to use a different approach.

Theorem \ref{theorem: boundedness of stability indices} has two interesting consequences in terms of statements on finiteness of numbers of initial ideals or, respectively, initial ideal chains: In Theorem \ref{weitere aspekte: charakterisierungen}, we will see that the number of initial ideal chains of $\IInc(\NN)$-invariant ideal chains in $R$ with respect to $\IInc(\NN)$-compatible term orders is finite. As a consequence of this, the number of initial ideals of $\IInc(\NN)$-invariant ideals of $R$ with respect to $\IInc(\NN)$-compatible term orders is finite, too, see Corollary \ref{corollary: finitely many initial ideals}.

Of course, Theorem \ref{theorem: boundedness of stability indices} would be insubstantial if there were only finitely many $\IInc(\NN)$-compatible term orders on $R$. For $c\geq 2$, we can easily construct an infinite number of $\IInc(\NN)$-compatible term orders: Choose any term order $\preceq'$ on the polynomial ring $\KK[x_{1,1},...,x_{c,1}]$. For every monomial $f\in R$, there is a decomposition $f=x_1^{a(1)}\cdot...\cdot x_n^{a(n)}$, where $a(i)=(a(i)_1,...,a(i)_c)\in\NN_0^c$ and $x_i^{a(i)}:=x_{1,i}^{a(i)_1}\cdot...\cdot x_{c,i}^{a(i)_c}$. Set
\begin{equation*}
x_1^{a(1)}\cdot...\cdot x_n^{a(n)}\prec x_1^{b(1)}\cdot...\cdot x_n^{b(n)}:\Leftrightarrow x_1^{a(i)}\prec' x_1^{b(i)}\text{ for }i=\min\lbrace j\,|\,a(j)\neq b(j)\rbrace.
\end{equation*}
This obviously defines an $\IInc(\NN)$-compatible term order on $R$, and if we choose two distinct term orders $\preceq'_1,\preceq'_2$ of $\KK[x_{1,1},...,x_{c,1}]$, then the corresponding term orders $\preceq_1,\preceq_2$ on $R$ are distinct, too. As there are uncountably many distinct term orders on $\KK[x_{1,1},...,x_{c,1}]$, our claim follows. 

For $c=1$, in contrast, there are only finitely many $\IInc(\NN)$-compatible term orders on $R$. Note that the above example of an $\IInc(\NN)$-compatible term order can be applied to the case $c=1$, too, using for $\preceq'$ the only term order there is on the polynomial ring $\KK[x]$ in one variable. This yields the term order
\begin{equation*}
x_1^{a_1}\cdot...\cdot x_n^{a_n}\prec x_1^{b_1}\cdot...\cdot x_n^{b_n}:\Leftrightarrow a_i<b_i\text{ for }i=\min\lbrace j\,|\,a_j\neq b_j\rbrace,
\end{equation*}
i.e. a term order of lexicographic type. We will see in Theorem \ref{theorem: preorders on R} that, essentially, every $\IInc(\NN)$-compatible term order on $R$ for $c=1$ is of this type, resulting in a number of only six distinct $\IInc(\NN)$-compatible term orders. This answers Question 5.5 \cite{hkl} by Hillar, Kroner, Leykin.

The article is organized as follows: We begin with some technical preparations in Section \ref{section: technicalities} needed for the proof of Theorem \ref{theorem: boundedness of stability indices}, which we will give in Section \ref{section 1}. Section \ref{section 1} also contains the proofs for the finiteness results mentioned above. In Section \ref{section 2} we will then study the question of what the $\IInc(\NN)$-compatible term orders are in the case $c=1$. Here, we will not only consider term orders but the larger class of monomial preorders, where we use the concept of a monomial preorder introduced in \cite{kemp}. In Theorem \ref{theorem: preorders on R}, we will give a full classification of $\IInc(\NN)$-compatible monomial preorders on $R$ for $c=1$, comprising a complete list of the $\IInc(\NN)$-compatible term orders.

\section{Technical preparations}\label{section: technicalities}
Here and in the section that follows, the number $c$ from the definitions of $R$ and $R_n$ is an arbitrary natural number. We start this section with some observations concerning the monoid $\IInc(\NN)$ and its action on $R$.

\begin{lemma}[cf. \cite{nr}, Proposition 4.6]
Let $l\leq m\leq n$ be natural numbers. Then
\begin{equation*}
\IInc(\NN)_{l,n}=\IInc(\NN)_{m,n}\circ\IInc(\NN)_{l,m},
\end{equation*}
meaning that for every $p_1\in\IInc(\NN)_{l,m}$, $p_2\in\IInc(\NN)_{m,n}$ we have $p_2\circ p_1\in\IInc(\NN)_{l,n}$, and every element $p\in\IInc(\NN)_{l,n}$ has such a decomposition.
\label{dekomposition}
\end{lemma}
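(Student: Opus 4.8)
The statement is an equality of sets of functions, so the plan is to prove the two inclusions separately. The inclusion ``$\supseteq$'' is routine: if $p_1\in\IInc(\NN)_{l,m}$ and $p_2\in\IInc(\NN)_{m,n}$, then $p_2\circ p_1$ is again strictly increasing, and since $p_1(l)\leq m$ and $p_2$ is monotone we get $(p_2\circ p_1)(l)=p_2(p_1(l))\leq p_2(m)\leq n$, whence $p_2\circ p_1\in\IInc(\NN)_{l,n}$.

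The content is the inclusion ``$\subseteq$'', i.e.\ the existence of a decomposition. Given $p\in\IInc(\NN)_{l,n}$, I would produce $p_1,p_2$ explicitly. The two naive attempts---letting $p_1$ be a pure index shift by $m-l$, or letting $p_1$ agree with $p$ up to index $l$---each secure one of the two constraints $p_1(l)\leq m$ and $p_2(m)\leq n$ but can violate the other, so the idea is to interpolate between them by truncation. Concretely, define
\[
p_1(i):=\min\bigl(p(i),\,m+i-l\bigr)\qquad(i\in\NN).
\]
Because $m\geq l$, the cap $m+i-l$ is a strictly increasing sequence of positive integers, so $p_1$ is strictly increasing with values in $\NN$; moreover $p_1(l)=\min(p(l),m)\leq m$, giving $p_1\in\IInc(\NN)_{l,m}$.

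It remains to build $p_2$ with $p_2\circ p_1=p$ and $p_2(m)\leq n$. Writing $A:=p_1(\NN)=\lbrace a_1<a_2<\dots\rbrace$ with $a_i=p_1(i)$, the equation $p_2\circ p_1=p$ forces $p_2(a_i)=p(i)$, and I would extend $p_2$ to all of $\NN$ by filling each gap between consecutive elements of $A$ as slowly as possible (increasing by one). Such a strictly increasing extension exists precisely when $a_1\leq p(1)$ and $a_{i+1}-a_i\leq p(i+1)-p(i)$ for all $i$; both follow from the definition of $p_1$ via a short case distinction on which term realizes the minimum, the only potentially dangerous case being vacuous since it would force $p(i+1)\leq p(i)$. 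The crux is the final estimate on $p_2(m)$: if $j$ denotes the largest index with $a_j\leq m$, then the slow filling gives $p_2(m)=p(j)+(m-a_j)$, and evaluating this using $a_l=\min(p(l),m)$ yields $p_2(m)=\max(p(l),m)$, which is $\leq n$ because $p(l)\leq n$ and $m\leq n$. The main obstacle is exactly this simultaneous control of $p_1(l)$ and $p_2(m)$; once the truncation defining $p_1$ is in place, the remaining verifications are bookkeeping.
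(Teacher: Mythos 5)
Your proof is correct, and there is nothing in the paper to compare it against: the paper does not prove this lemma at all, but quotes it from Nagel--R\"omer (\cite{nr}, Proposition 4.6). So your proposal supplies a self-contained argument where the paper relies on a citation. The inclusion $\supseteq$ is handled as expected. For $\subseteq$, your truncation $p_1(i):=\min(p(i),\,m+i-l)$ works: it is strictly increasing (a minimum of two strictly increasing integer sequences), and $p_1(l)=\min(p(l),m)\leq m$. The two conditions you isolate for extending the partial map $a_i=p_1(i)\mapsto p(i)$ to a strictly increasing $p_2$ on all of $\NN$ do hold: $a_1\leq p(1)$ is immediate, and for the gap condition $a_{i+1}-a_i\leq p(i+1)-p(i)$ the only problematic configuration (cap realizing the minimum at $i$ but not at $i+1$) would give $p(i+1)\leq m+i-l\leq p(i)$, contradicting strict monotonicity of $p$ --- exactly the vacuity you point out; the remaining three configurations give the inequality directly. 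The final evaluation is also right: if $p(l)>m$, then $a_l=m$ is the largest element of $p_1(\NN)$ not exceeding $m$, so the slow filling gives $p_2(m)=p(l)$; if $p(l)\leq m$, then the largest $a_j\leq m$ equals $p(j)$ (with $j\geq l$), so $p_2(m)=p(j)+(m-p(j))=m$. In both cases $p_2(m)=\max(p(l),m)\leq n$, since $p(l)\leq n$ and $m\leq n$ by hypothesis. So all the steps you flag as bookkeeping genuinely close, and the construction yields $p=p_2\circ p_1$ with $p_1\in\IInc(\NN)_{l,m}$, $p_2\in\IInc(\NN)_{m,n}$ as required.
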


\begin{lemma}
Let $N,l\in\NN$, $n\geq N$ and $i_{1}<...<i_{l}\leq N$,  $j_{1}<...<j_{l}\leq n$ be two ascending sequences of natural numbers. Then there is $p\in\IInc(\NN)_{N,n}$ such that $p(i_r)=j_r$ for all $r\in[l]$ if and only if $j_r-j_{r-1}\geq i_r-i_{r-1}$ for all $r\in[l+1]$, where we set $i_0=j_0=0$ and $i_{l+1}=N+1$, $j_{l+1}=n+1$.
\label{inc-lemma allgemein}
\end{lemma}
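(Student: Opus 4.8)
The plan is to prove the two implications separately. Throughout I will use the elementary fact that every $p\in\IInc(\NN)$, being strictly increasing on the integers, satisfies $p(b)-p(a)\geq b-a$ whenever $a\leq b$ in $\NN$, and in particular $p(a)\geq a$. The forward direction is then immediate, while the backward direction calls for an explicit block-by-block construction of a suitable $p$.

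For the necessity of the inequalities, suppose $p\in\IInc(\NN)_{N,n}$ with $p(i_r)=j_r$ for all $r\in[l]$ is given. For each $r\in\{2,\dots,l\}$, applying the displayed fact to $a=i_{r-1}$ and $b=i_r$ yields $j_r-j_{r-1}=p(i_r)-p(i_{r-1})\geq i_r-i_{r-1}$. For $r=1$ the inequality $j_1\geq i_1$ follows from $p(i_1)\geq i_1$ and the convention $i_0=j_0=0$. For $r=l+1$, I combine $p(i_l)=j_l$ with $p(N)\leq n$ and $p(N)-p(i_l)\geq N-i_l$ to obtain $n\geq j_l+(N-i_l)$, which rearranges to $j_{l+1}-j_l\geq i_{l+1}-i_l$ under the conventions $i_{l+1}=N+1$, $j_{l+1}=n+1$.

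For the sufficiency, I build $p$ explicitly by right-justified interpolation on the blocks cut out by the $i_r$. For $r\in[l]$ and $k\in\{i_{r-1}+1,\dots,i_r\}$ I set $p(k):=j_r+(k-i_r)$, and for $k>i_l$ I set $p(k):=n+(k-N)$; under the conventions $i_{l+1}=N+1$, $j_{l+1}=n+1$ the latter is just the same rule with $r=l+1$. Then $p(i_r)=j_r$ for all $r\in[l]$, on each block $p$ increases by exactly $1$ per step and is hence strictly increasing there, and $p(N)\leq n$ (indeed $p(N)=n$ if $i_l<N$, and $p(N)=j_l\leq n$ if $i_l=N$). It remains to check strict monotonicity across block boundaries: for each $r$ the requirement $p(i_{r-1}+1)>p(i_{r-1})$ reduces, for integers, precisely to $j_r-j_{r-1}\geq i_r-i_{r-1}$, which holds by hypothesis (the case $r=1$ being $p(1)\geq 1$, i.e. $j_1\geq i_1$, and the case $r=l+1$ being the tail inequality $n-j_l\geq N-i_l$). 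Hence $p$ is strictly increasing on all of $\NN$ with $p(N)\leq n$, so $p\in\IInc(\NN)_{N,n}$ is as desired.

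I expect no genuine obstacle here; the argument is a clean interpolation. The one point demanding care is the bookkeeping at the block boundaries: verifying that the piecewise formula is globally strictly increasing and that the two edge cases are correctly encoded by the conventions $i_0=j_0=0$ (which turns the first-block constraint into $j_1\geq i_1$) and $i_{l+1}=N+1$, $j_{l+1}=n+1$ (which turns the tail constraint $p(N)\leq n$ into the $r=l+1$ inequality), including the degenerate subcase $i_l=N$ where the tail block is empty.
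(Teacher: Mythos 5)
Your proof is correct, and it takes a genuinely different route from the paper's. You argue directly in both directions: necessity follows from the elementary gap inequality $p(b)-p(a)\geq b-a$ for strictly increasing self-maps of $\NN$ (with the tail inequality handled via $p(N)\leq n$), and sufficiency from an explicit ``right-justified'' piecewise construction of $p$, where the strict-monotonicity check at each block boundary reduces exactly to the hypothesized inequality for that index, and the conventions $i_0=j_0=0$ and $i_{l+1}=N+1$, $j_{l+1}=n+1$ correctly encode the two edge constraints $p(1)\geq 1$ and $p(N)\leq n$, including the degenerate case $i_l=N$. The paper instead proceeds by induction on $n\geq N$: the base case $n=N$ uses that every element of $\IInc(\NN)_{N,N}$ restricts to the identity on $[N]$, and the inductive step invokes the decomposition $\IInc(\NN)_{N,n+1}=\IInc(\NN)_{n,n+1}\circ\IInc(\NN)_{N,n}$ from Lemma \ref{dekomposition} to pass between $n$ and $n+1$, tracking how a single one-step shift changes the gaps $j_r-j_{r-1}$. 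Your approach buys brevity and self-containedness---no induction, no reliance on Lemma \ref{dekomposition}, and an explicit witness $p$ that could be reused elsewhere---while the paper's argument stays within the article's standard toolkit, exercising the decomposition lemma that serves as the workhorse of the later proofs; for this lemma in isolation, your construction is arguably the cleaner of the two.
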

\begin{proof}
We use induction on $n\geq N$. For $n=N$, the restriction of each element from $\IInc(\NN)_{N,n}$ to $[N]$ is the identity on $[N]$. So the identities $p(i_r)=j_r$ imply $i_r=j_r$ for all $r\in[l]$ and therefore $j_{r}-j_{r-1}=i_{r}-i_{r-1}$ for all $r\in[l+1]$. Conversely, assume that $j_{r}-j_{r-1}\geq i_{r}-i_{r-1}$ holds for all $r\in[l+1]$. If one of these inequalities was strict, we would obtain:
\begin{equation*}
N+1=j_{l+1}=\sum_{r=1}^{l+1}(j_{r}-j_{r-1})>\sum_{r=1}^{l+1}(i_{r}-i_{r-1})=i_{l+1}=N+1,
\end{equation*}
which is a contradiction. We conclude that $j_{r}-j_{r-1}=i_{r}-i_{r-1}$ for all $r\in[l+1]$, so $j_{r}=i_{r}=\iid_{\NN}(i_{r})$ for all $r\in[l]$.\newline
Now assume that our claim holds for $n$. Let $j_1<...<j_l\leq n+1$ and $p\in\IInc(\NN)_{N,n+1}$ with $j_r=p(i_r)$ for all $r\in[l]$. By Lemma \ref{dekomposition}, $p$ has a decomposition $p=p_{2}\circ p_{1}$ with $p_{1}\in\IInc(\NN)_{N,n}$ and $p_{2}\in\IInc(\NN)_{n,n+1}$. Let $k_r:=p_1(i_r)$ for all $r\in[l]$. As $\lbrace k_{1},...,k_{l}\rbrace$ is a subset of $[n]$, there is either $s\in[l]$ with $p_2(k_r)=k_r$ for all $r<s$ and $p_2(k_r)=k_r+1$ for all $r\geq s$ or the restriction of $p_2$ to $[k_l]$ is the identity on $[k_l]$. Setting $s:=l+1$ in the second case and letting $k_0:=0$, $k_{l+1}:=n+1$, we obtain for both cases, for every $r\in[l+1]$:
\begin{equation*}
j_{r}-j_{r-1}=\left\lbrace\begin{array}{ll}
k_{r}-k_{r-1}&,\,r<s\text{ or }r>s\\
k_{r}-k_{r-1}+1&,\,r=s
\end{array}\right.,
\end{equation*}
hence, by induction, $j_{r}-j_{r-1}\geq i_{r}-i_{r-1}$.\newline
Now let $j_{1}<...<j_{l}\leq n+1$ be a sequence of natural numbers satisfying $j_{r}-j_{r-1}\geq i_{r}-i_{r-1}$ for all $r\in[l+1]$. We have
\begin{equation*}
\sum_{r=1}^{l+1}\left(j_{r}-j_{r-1}\right)-\left(i_{r}-i_{r-1}\right)=j_{l+1}-i_{l+1}=\left(n+2\right)-\left(N+1\right)\geq 1,
\end{equation*}
so there is $s\in[l+1]$ with $j_{s}-j_{s-1}>i_{s}-i_{s-1}\geq 1$. Define $p\in\IInc(\NN)$ by
\begin{equation*}
p(k):=\left\lbrace\begin{array}{ll}
k&,k\leq j_{s}-2\\
k+1&,k\geq j_{s}-1
\end{array}\right..
\end{equation*}
By the choice of $s$, we have $j_{s-1}\leq j_s-2$, so $j_1,...,j_{l+1}$ are contained in the image of $p$ and we can define the sequence $k_1<...<k_{l+1}$ by setting $k_{r}:=p^{-1}(j_{r})$. For $r\in[l+1]$, we have
\begin{equation*}
k_{r}=\left\lbrace\begin{array}{ll}
j_{r}&,\,r\leq s-1\\
j_{r}-1&,\,r\geq s
\end{array}\right..
\end{equation*}
In particular, $l+1\geq s$ implies $k_{l+1}=j_{l+1}-1=n+1$. Setting $k_0:=0$, we obtain
\begin{equation*}
(k_{r}-k_{r-1})-(i_{r}-i_{r-1})=\left\lbrace\begin{array}{ll}
(j_{r}-j_{r-1})-(i_{r}-i_{r-1})&,\,r\neq s\\
(j_{r}-j_{r-1})-(i_{r}-i_{r-1})-1&,\,r=s
\end{array}\right.
\end{equation*}
for all $r\in[l+1]$ and thus $k_{r}-k_{r-1}\geq i_{r}-i_{r-1}$ by the choice of $s$. Hence by induction, there is $q\in\IInc(\NN)_{N,n}$ with $q(i_{r})=k_{r}$ for all $r\in[l]$, yielding for every $r\in[l]$ the identity $j_{r}=(p\circ q)(i_{r})$. By Lemma \ref{dekomposition}, $p\circ q$ is contained in $\IInc(\NN)_{N,n+1}$, so the claim follows for $n+1$.
\end{proof}

\begin{lemma}
Let $m\leq n$ be natural numbers, $f\in R_m$ a polynomial of degree $\deg(f)>0$ and $p\in\IInc(\NN)$ with $p\cdot f\in R_{n}$. Let $m',n'\in\mathbb{N}$ be minimal with $f\in R_{m'}$ and $p\cdot f\in R_{n'}$. Then the following equivalence holds: There is $p'\in\IInc(\NN)_{m,n}$ such that $p'\cdot f=p\cdot f$ if and only if $m-m'\leq n-n'$.
\label{ersetzen durch abbildung aus Pi_n,m}
\end{lemma}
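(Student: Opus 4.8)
The plan is to translate the condition $p'\cdot f=p\cdot f$ into a statement about finitely many prescribed values and then invoke Lemma \ref{inc-lemma allgemein}. First I would record the elementary fact that two elements of $\IInc(\NN)$ act identically on $f$ precisely when they agree on all second indices that actually occur in $f$. Writing $a_1<a_2<\cdots<a_l$ for these indices (so $l\geq 1$ because $\deg(f)>0$, and $a_l=m'$ by minimality of $m'$), the existence of some $p'\in\IInc(\NN)_{m,n}$ with $p'\cdot f=p\cdot f$ is thus equivalent to the existence of $p'\in\IInc(\NN)_{m,n}$ with $p'(a_r)=p(a_r)$ for all $r\in[l]$.

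Next I would set $b_r:=p(a_r)$. Since $p$ is strictly increasing and $a_l=m'$ is the largest index appearing in $f$, the largest index appearing in $p\cdot f$ is $b_l=p(m')$, and by minimality of $n'$ this gives $b_l=n'$. The ascending sequences $a_1<\cdots<a_l\leq m$ and $b_1<\cdots<b_l\leq n$ are then of exactly the shape needed to apply Lemma \ref{inc-lemma allgemein} with $N=m$: a suitable $p'$ exists if and only if $b_r-b_{r-1}\geq a_r-a_{r-1}$ for all $r\in[l+1]$, using the conventions $a_0=b_0=0$, $a_{l+1}=m+1$ and $b_{l+1}=n+1$.

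The decisive step—and the only place where anything must actually be verified—is to observe that every one of these inequalities except the last holds automatically. For $r\in[l]$ the inequality reads $p(a_r)-p(a_{r-1})\geq a_r-a_{r-1}$ (reading $p(a_0)$ as $0$), which is satisfied for every $p\in\IInc(\NN)$, since a strictly increasing map on $\NN$ increases by at least one at each consecutive integer and satisfies $p(k)\geq k$. Hence the whole system reduces to the single inequality at $r=l+1$, namely $(n+1)-n'\geq(m+1)-m'$, which is literally $m-m'\leq n-n'$. Combined with the equivalence from the first step, this proves the claim. I do not expect a genuine obstacle here: the entire content lies in correctly bookkeeping the boundary differences of Lemma \ref{inc-lemma allgemein} and in noticing that membership of $p$ in $\IInc(\NN)$ already forces all interior inequalities, so that the gap condition $m-m'\leq n-n'$ is exactly what survives.
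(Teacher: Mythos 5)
Your proof is correct and follows essentially the same route as the paper: translate the condition $p'\cdot f=p\cdot f$ into prescribing the values $p'(a_r)=p(a_r)$ on the indices actually occurring in $f$, then apply Lemma \ref{inc-lemma allgemein} with $N=m$ and observe that only the boundary inequality $(n+1)-n'\geq (m+1)-m'$ carries any content. The single point where you diverge is how the interior inequalities $b_r-b_{r-1}\geq a_r-a_{r-1}$, $r\in[l]$, are obtained: the paper gets them by a second application of Lemma \ref{inc-lemma allgemein} (forward direction, with $N=m'$ and target $n'$, using that $p\in\IInc(\NN)_{m',n'}$), whereas you derive them directly from the elementary estimate that any strictly increasing $p:\NN\rightarrow\NN$ satisfies $p(b)-p(a)\geq b-a$ and $p(k)\geq k$, the latter covering the case $r=1$ with the convention $b_0=a_0=0$. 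Your shortcut is equally rigorous and slightly more economical; the paper's variant merely has the stylistic feature of funneling both halves of the argument through the one combinatorial lemma.
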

\begin{proof}
Let $i_{1}<...<i_{l}\leq m$ be the indices for which there is $k_r\in[c]$ such that $f$ contains the variable $x_{k_{r},i_{r}}$, and let $j_{1}<...<j_{l}\leq n$ be the corresponding indices for $p\cdot f$. By assumption, we have $l\geq 1$, $i_{l}=m'$ and $j_{l}=n'$. As $p(i_{r})=j_{r}$ for all $r\in[l]$, we have $p\in\IInc(\NN)_{m',n'}$ and by Lemma \ref{inc-lemma allgemein} we obtain:
\begin{equation*}
j_{r}-j_{r-1}\geq i_{r}-i_{r-1}~\text{for all }r\in[l+1]
\end{equation*}
with $i_{0}=j_{0}=0$ and $i_{l+1}=m'+1$, $j_{l+1}=n'+1$. So again by Lemma \ref{inc-lemma allgemein} there is $p'\in\IInc(\NN)_{m,n}$ with $p'(i_{r})=j_{r}$ for all $r\in[l]$ if and only if $n+1-n'=n+1-j_{l}\geq m+1-i_{l}=m+1-m'$.
\end{proof}

\begin{lemma}
Let $i_1\leq i_2\leq...$ be an ascending sequence of natural numbers and $g_{i_n}\in R_{i_n}$ be monomials. Then there are indices $j<k$ such that $g_{i_k}$ is contained in $\langle\IInc(\NN)_{i_j,i_k}\cdot g_{i_j}\rangle_{R_{i_k}}$.
\label{c mal NN: stabilit"at monomialer idealketten}
\end{lemma}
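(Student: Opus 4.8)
The plan is to recast the statement as a well-quasi-ordering assertion and then apply Higman's lemma. First I would encode each monomial $g_{i_n}$ by its sequence of exponent vectors: writing $g_{i_n}=x_1^{a_n(1)}\cdots x_{i_n}^{a_n(i_n)}$ with $a_n(r)\in\NN_0^c$ (in the notation $x_r^{a_n(r)}=x_{1,r}^{a_n(r)_1}\cdots x_{c,r}^{a_n(r)_c}$ from the introduction), I associate to $g_{i_n}$ the length-$i_n$ sequence $\mathbf{a}_n:=(a_n(1),\dots,a_n(i_n))\in(\NN_0^c)^{i_n}$. Crucially I keep the \emph{full} sequence, including every position $r$ with $a_n(r)=0$. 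On the set $(\NN_0^c)^{*}:=\bigcup_{m\geq 0}(\NN_0^c)^m$ of finite sequences I would use the embedding order $\preceq$ defined by declaring $(u_1,\dots,u_m)\preceq(w_1,\dots,w_{m'})$ if and only if there is a strictly increasing injection $\sigma\colon[m]\to[m']$ with $u_r\leq w_{\sigma(r)}$ componentwise for every $r\in[m]$.

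The heart of the argument is that $\mathbf{a}_j\preceq\mathbf{a}_k$ is equivalent to the desired membership $g_{i_k}\in\langle\IInc(\NN)_{i_j,i_k}\cdot g_{i_j}\rangle_{R_{i_k}}$. For the forward direction, a witnessing $\sigma\colon[i_j]\to[i_k]$ with $a_j(r)\leq a_k(\sigma(r))$ extends, via Lemma \ref{inc-lemma allgemein} applied with $N=i_j$, $n=i_k$ and source indices $1,\dots,i_j$, to a $p\in\IInc(\NN)_{i_j,i_k}$ with $p|_{[i_j]}=\sigma$; the monomial $p\cdot g_{i_j}$ then carries the exponent vector $a_j(r)\leq a_k(\sigma(r))$ at second index $\sigma(r)$ and is hence a divisor of $g_{i_k}$, so $g_{i_k}$ lies in the monomial ideal generated by $\IInc(\NN)_{i_j,i_k}\cdot g_{i_j}$. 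Conversely, any $p\in\IInc(\NN)_{i_j,i_k}$ with $p\cdot g_{i_j}\mid g_{i_k}$ restricts to a strictly increasing injection $\sigma=p|_{[i_j]}\colon[i_j]\to[i_k]$ (since $p(i_j)\leq i_k$) satisfying $a_j(r)\leq a_k(\sigma(r))$, i.e. $\mathbf{a}_j\preceq\mathbf{a}_k$. Granting this equivalence, the lemma follows at once: by Dickson's lemma $(\NN_0^c,\leq)$ is a well-quasi-order, so by Higman's lemma the embedding order $\preceq$ on $(\NN_0^c)^{*}$ is a well-quasi-order as well, whence the infinite sequence $(\mathbf{a}_n)_{n\geq 1}$ must contain two terms with $\mathbf{a}_j\preceq\mathbf{a}_k$ for some $j<k$.

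The step I expect to be the main obstacle, and the reason for the slightly unusual encoding, is guaranteeing that a purely combinatorial Higman embedding can actually be realized by an element of $\IInc(\NN)_{i_j,i_k}$. Since every strictly increasing $p$ satisfies $p(s_r)-p(s_{r-1})\geq s_r-s_{r-1}$, the gap condition of Lemma \ref{inc-lemma allgemein} is a genuine constraint: if I recorded only the subsequence of nonzero exponent vectors, an embedding of supports could send two source positions that are far apart to two target positions that are close together, and then no admissible $p$ would exist. Padding with the zero vectors repairs this, because it makes the source indices consecutive ($1,2,\dots,i_j$), so all source gaps equal $1$ and the gap condition holds automatically for any strictly increasing injection $\sigma$. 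Thus the choice to work with full padded sequences rather than supports is precisely what makes the equivalence faithful and lets Higman's lemma deliver the result.
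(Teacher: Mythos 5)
Your proposal is correct, but it follows a genuinely different route from the paper's own proof. The paper invokes Theorem 3.1 of \cite{hs} (Inc-divisibility of monomials is a well-partial-order) to extract an infinite subsequence in which each $g_{i_{n_{k+1}}}$ is divisible by $p_k\cdot g_{i_{n_k}}$ for some \emph{unconstrained} $p_k\in\IInc(\NN)$, and then deals with the remaining difficulty---that $p_k$ need not lie in $\IInc(\NN)_{i_{n_k},i_{n_{k+1}}}$---by contradiction: using the numerical criterion of Lemma \ref{ersetzen durch abbildung aus Pi_n,m}, failure of replaceability at every step would force $i_{n_k}-m_k>i_{n_{k+1}}-m_{k+1}$ for all $k$, an infinite strictly decreasing sequence of natural numbers. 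You instead bypass \cite{hs} and apply Higman's lemma (over Dickson's lemma) directly to the zero-padded exponent sequences, and your padding is exactly what makes the window constraint $p(i_j)\leq i_k$ automatic: with consecutive source indices $1,\dots,i_j$ all source gaps equal $1$, so the gap condition of Lemma \ref{inc-lemma allgemein} holds for any strictly increasing injection, and every Higman embedding is realized by some $p\in\IInc(\NN)_{i_j,i_k}$. In substance both arguments rest on the same wqo machinery (the cited theorem of \cite{hs} is itself Higman-type), but you absorb the window constraint into the encoding, whereas the paper repairs it after the fact by a descent argument. What your version buys: it is self-contained modulo two classical lemmas, needs no separate treatment of constant monomials, and actually characterizes the containment $g_{i_k}\in\langle\IInc(\NN)_{i_j,i_k}\cdot g_{i_j}\rangle_{R_{i_k}}$ as embedding of padded sequences (for the converse you implicitly use that a monomial lies in a monomial ideal iff some generator divides it---worth stating, though that direction is not needed for the lemma). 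What the paper's version buys: it reuses machinery already established for other purposes, in keeping with its remark that Inc-divisibility being a well-partial-order can substitute for Noetherianity throughout.
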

\begin{proof}
There is nothing to show if $g_{i_n}\in\KK$ for some $n\in\NN$, so assume $\deg(g_{i_n})>0$ for all $n$. By Theorem 3.1 in \cite{hs}, there is an infinite subsequence $(g_{i_{n_k}})_{k\geq 1}$ of $(g_{i_n})_{n\geq 1}$ such that for each $k\in\NN$ we have $g_{i_{n_{k+1}}}=f_k(p_k\cdot g_{i_{n_k}})$ for a monomial $f_k\in R_{i_{n_{k+1}}}$ and $p_k\in\IInc(\NN)$. We claim that one of the $p_k$ can be substituted for an element from $\IInc(\NN)_{i_{n_k},i_{n_{k+1}}}$. By contradiction, assume that this is not the case. For each $k$ let $m_k\leq i_{n_k}$ be minimal with $g_{i_{n_k}}\in R_{m_k}$. By Lemma \ref{ersetzen durch abbildung aus Pi_n,m} we have $i_{n_k}-m_k>i_{n_{k+1}}-m_{k+1}$ for every $k\geq 1$. But this contradicts the fact that there are no infinite, strictly decreasing sequences of natural numbers.
\end{proof}

We now return to our problem of stability indices of initial ideal chains with respect to $\IInc(\NN)$-compatible term orders. We begin with the remark that if $J_\circ$ is an $\IInc(\NN)$-invariant ideal chain, then every $N\geq\IInd(J_\circ)$ satisfies the stability condition (\ref{intro: stability index}).

\begin{lemma}
Let $J_\circ$ be an $\IInc(\NN)$-invariant ideal chain in $R$ and let $N\geq\IInd(J_\circ)$. Then
\begin{equation*}
\left\langle\IInc(\NN)_{N,n}\cdot J_N\right\rangle_{R_n}=J_n
\end{equation*}
for all $n\geq N$.
\label{weitere aspekte: gr"o"sere indizes erf"ullen stabilit"atsbedingung}
\end{lemma}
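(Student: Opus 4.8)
The plan is to prove the two inclusions separately. The inclusion $\langle\IInc(\NN)_{N,n}\cdot J_N\rangle_{R_n}\subseteq J_n$ is immediate: it is precisely the defining property of an $\IInc(\NN)$-invariant ideal chain applied to the pair $N\leq n$, since $\IInc(\NN)_{N,n}\cdot J_N\subseteq J_n$ and $J_n$ is an ideal of $R_n$. So the whole content lies in establishing the reverse inclusion $J_n\subseteq\langle\IInc(\NN)_{N,n}\cdot J_N\rangle_{R_n}$.

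For this, write $N_0:=\IInd(J_\circ)$, so that by the definition of the stability index we have $\langle\IInc(\NN)_{N_0,k}\cdot J_{N_0}\rangle_{R_k}=J_k$ for every $k\geq N_0$. Applying this with $k=n$ shows that $J_n$ is generated, as an ideal of $R_n$, by the elements $p\cdot g$ with $p\in\IInc(\NN)_{N_0,n}$ and $g\in J_{N_0}$; it therefore suffices to check that each such generator lies in $\langle\IInc(\NN)_{N,n}\cdot J_N\rangle_{R_n}$. Here I would invoke the semigroup decomposition of Lemma \ref{dekomposition} with $l=N_0\leq m=N\leq n$, which gives $\IInc(\NN)_{N_0,n}=\IInc(\NN)_{N,n}\circ\IInc(\NN)_{N_0,N}$. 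Thus any such $p$ factors as $p=p_2\circ p_1$ with $p_1\in\IInc(\NN)_{N_0,N}$ and $p_2\in\IInc(\NN)_{N,n}$, so that $p\cdot g=p_2\cdot(p_1\cdot g)$.

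It then remains to observe that $p_1\cdot g\in\IInc(\NN)_{N_0,N}\cdot J_{N_0}\subseteq J_N$, again by invariance applied to the pair $N_0\leq N$, whence $p\cdot g=p_2\cdot(p_1\cdot g)\in\IInc(\NN)_{N,n}\cdot J_N\subseteq\langle\IInc(\NN)_{N,n}\cdot J_N\rangle_{R_n}$. Since this holds for every generator $p\cdot g$ of $J_n$, the reverse inclusion follows, and combining the two inclusions yields the claim. I do not expect a genuine obstacle here: the argument is a direct factorization through the index $N$, and the only point requiring care is to apply Lemma \ref{dekomposition} in the correct order, with $\IInc(\NN)_{N,n}$ on the outside and $\IInc(\NN)_{N_0,N}$ on the inside, so that the intermediate element $p_1\cdot g$ indeed lands in $J_N$ and the final rewriting exhibits $p\cdot g$ as an element of $\IInc(\NN)_{N,n}\cdot J_N$.
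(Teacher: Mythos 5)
Your proof is correct and follows essentially the same route as the paper: both reduce to the stability identity at $\IInd(J_\circ)$, factor $\IInc(\NN)_{\IInd(J_\circ),n}$ through the intermediate index $N$ via Lemma \ref{dekomposition}, and then apply the $\IInc(\NN)$-invariance of $J_\circ$ to the pair $\IInd(J_\circ)\leq N$. The only difference is presentational — you argue generator by generator and spell out the easy inclusion that the paper leaves implicit, while the paper writes the same argument as a one-line chain of set-level identities.
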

\begin{proof}
Let $N\geq\IInd(J_\circ)$. Then by Lemma \ref{dekomposition} and the $\IInc(\NN)$-invariance of $J_\circ$, we have
\begin{align*}
J_{n}&=\left\langle\IInc(\NN)_{\IInd(J_{\circ}),n}\cdot J_{\IInd(J_{\circ})}\right\rangle_{R_{n}}\\
&=\left\langle\IInc(\NN)_{N,n}\cdot\left(\IInc(\NN)_{\IInd(J_{\circ}),N}\cdot J_{\IInd(J_{\circ})}\right)\right\rangle_{R_{n}}\subseteq\left\langle\IInc(\NN)_{N,n}\cdot J_{N}\right\rangle_{R_{n}}.
\end{align*}
\end{proof}

The key to our proof of Theorem \ref{theorem: boundedness of stability indices} is the following proposition.

\begin{proposition}
Let $J_{\circ}=J_{1}\subseteq J_{2}\subseteq...$ be an $\IInc(\NN)$-invariant ideal chain in $R$ and $N\geq \IInd(J_\circ)$. Then for every $\IInc(\NN)$-compatible term order $\preceq$, the identity $\iin_{\preceq}(J_{2N})=\langle\IInc(\NN)_{N,2N}\cdot\iin_{\preceq}(J_N)\rangle_{R_{2N}}$ implies that $\IInd(\iin_\preceq(J_\circ))\leq 2N$.
\label{konstanzphasen}
\end{proposition}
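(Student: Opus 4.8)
The plan is to show directly that the index $2N$ satisfies the stability condition for the initial ideal chain $I_\circ:=\iin_\preceq(J_\circ)$; since $\IInd(I_\circ)$ is by definition the least index with this property, this yields $\IInd(I_\circ)\le 2N$. As $I_\circ$ is $\IInc(\NN)$-invariant, we always have $\langle\IInc(\NN)_{2N,n}\cdot\iin_\preceq(J_{2N})\rangle_{R_n}\subseteq\iin_\preceq(J_n)$ for $n\ge 2N$, so only the reverse inclusion needs proof. Moreover, combining the hypothesis $\iin_\preceq(J_{2N})=\langle\IInc(\NN)_{N,2N}\cdot\iin_\preceq(J_N)\rangle_{R_{2N}}$ with the decomposition $\IInc(\NN)_{N,n}=\IInc(\NN)_{2N,n}\circ\IInc(\NN)_{N,2N}$ from Lemma \ref{dekomposition}, one obtains $\langle\IInc(\NN)_{2N,n}\cdot\iin_\preceq(J_{2N})\rangle_{R_n}=\langle\IInc(\NN)_{N,n}\cdot\iin_\preceq(J_N)\rangle_{R_n}$ for every $n\ge 2N$. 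Hence the whole proposition reduces to the single claim that, for every $n\ge 2N$ and every $f\in J_n$, the leading monomial $\iin_\preceq(f)$ lies in $\langle\IInc(\NN)_{N,n}\cdot\iin_\preceq(J_N)\rangle_{R_n}$, since these leading monomials generate $\iin_\preceq(J_n)$.

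To prove this claim I would fix $n\ge 2N$ and $f\in J_n$. Since $N\ge\IInd(J_\circ)$, Lemma \ref{weitere aspekte: gr"o"sere indizes erf"ullen stabilit"atsbedingung} gives $J_n=\langle\IInc(\NN)_{N,n}\cdot J_N\rangle_{R_n}$, so we may write $f=\sum_k h_k\,(q_k\cdot f_k)$ with $f_k\in J_N$, $q_k\in\IInc(\NN)_{N,n}$ and $h_k\in R_n$. By $\IInc(\NN)$-compatibility we have $\iin_\preceq(q_k\cdot f_k)=q_k\cdot\iin_\preceq(f_k)$, and I set $M:=\max_k\iin_\preceq(h_k)\,(q_k\cdot\iin_\preceq(f_k))$, the maximum of the leading monomials of the summands. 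If no cancellation of the top terms occurs, that is $\iin_\preceq(f)=M$, then $\iin_\preceq(f)$ is a multiple of some $q_{k_0}\cdot\iin_\preceq(f_{k_0})$ with $\iin_\preceq(f_{k_0})\in\iin_\preceq(J_N)$ and $q_{k_0}\in\IInc(\NN)_{N,n}$, so it lies in $\langle\IInc(\NN)_{N,n}\cdot\iin_\preceq(J_N)\rangle_{R_n}$ and we are done.

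The hard part is the cancellation case $\iin_\preceq(f)\prec M$. Here I would choose, among all representations of $f$ of the above shape, one for which $M$ is minimal (possible since $\preceq$ restricts to a well-order on the finitely many variables of the Noetherian ring $R_n$). When the top terms cancel, the classical theory of $S$-polynomials in $R_n$ reduces the analysis of the newly produced leading monomial to the cancellation coming from a single pair of generators $q_k\cdot f_k,\ q_{k'}\cdot f_{k'}$. The crucial observation is that $f_k,f_{k'}\in J_N\subseteq R_N$ involve at most $N$ of the second indices each, so the second indices occurring in such a pairwise $S$-polynomial meet at most $2N$ distinct slots. Using Lemma \ref{inc-lemma allgemein} I would fix an order-preserving identification of these at most $2N$ slots with an initial segment of $\NN$, and --- because at most $N$ slots are occupied inside the target window $[2N]$, which leaves the room demanded by the inequality in Lemma \ref{ersetzen durch abbildung aus Pi_n,m} --- realize it by elements of the bounded monoids $\IInc(\NN)_{\cdot,\cdot}$. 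This transports the pairwise cancellation into $R_{2N}$, where the hypothesis forces the resulting leading monomial into $\langle\IInc(\NN)_{N,2N}\cdot\iin_\preceq(J_N)\rangle_{R_{2N}}$; transporting it back by the corresponding element of $\IInc(\NN)_{2N,n}$ and running the resulting descent on $M$ places $\iin_\preceq(f)$ in $\langle\IInc(\NN)_{N,n}\cdot\iin_\preceq(J_N)\rangle_{R_n}$, as required. I expect this control of cancellation to be the main obstacle, and it is exactly where the factor $2N$ enters: the width $2N$ is the combined support of two generators of width $N$, which is why stability of the initial chain need only be verified at the single step from $N$ to $2N$.
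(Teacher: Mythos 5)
Your proposal is correct and takes essentially the same route as the paper: reduce the statement to showing that the $\IInc(\NN)_{N,n}$-shifts of (a generating set of) $J_N$ form a Gr\"obner basis of $J_n$, control the top-term cancellation through pairwise $S$-polynomials, compress each pair --- which occupies at most $2N$ second indices, the precise source of the bound $2N$ --- into $R_{2N}$ by $\IInc(\NN)$-maps, invoke the hypothesis $\iin_{\preceq}(J_{2N})=\langle\IInc(\NN)_{N,2N}\cdot\iin_{\preceq}(J_N)\rangle_{R_{2N}}$ there, and transport the resulting standard representation back via $\IInc(\NN)$-compatibility and Lemma \ref{dekomposition}. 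The only notable difference is that the paper compresses along the full windows $p_1([N])\cup p_2([N])$ (padded to exactly $2N$ slots), which makes both the realization of the compressed elements by $\IInc(\NN)_{N,2N}$ and the membership of the decompression map in $\IInc(\NN)_{2N,n}$ automatic, whereas your support-based identification requires the counting inequality of Lemma \ref{ersetzen durch abbildung aus Pi_n,m} together with a careful choice of the unoccupied slots inside $[n]$ --- a fixable but genuinely delicate point that your sketch glosses over.
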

\begin{proof}
Let $N\geq\IInd(J_\circ)$ and $\preceq$ be an $\IInc(\NN)$-compatible term order. Suppose that $\iin_{\preceq}(J_{2N})=\langle\IInc(\NN)_{N,2N}\cdot\iin_{\preceq}(J_N)\rangle_{R_{2N}}$. To prove the proposition, it is enough to show that the corresponding identity holds for every $n>2N$, as this implies
\begin{align*}
\iin_\preceq(J_n)&=\left\langle\IInc(\NN)_{N,n}\cdot\iin_\preceq(J_N)\right\rangle_{R_n}\\
&=\left\langle\IInc(\NN)_{2N,n}\cdot(\IInc(\NN)_{N,2N}\cdot\iin_\preceq(J_N))\right\rangle_{R_n}\\
&\subseteq\left\langle\IInc(\NN)_{2N,n}\cdot\iin_\preceq(J_{2N})\right\rangle_{R_n},
\end{align*}
where we used Lemma \ref{dekomposition} in the second and the $\IInc(\NN)$-invariance of $\iin_\preceq(J_\circ)$ in the third line. To this end, it suffices to show that if $\mathcal{G}$ is a Gr\"obner basis of $J_N$ with respect to $\preceq$, then $\mathcal{G}':=\IInc(\NN)_{N,n}\cdot\mathcal{G}$ is a Gr\"obner basis of $J_{n}$ with respect to $\preceq$, because this in turn implies
\begin{align*}
\iin_{\preceq}(J_{n})&=\left\langle\iin_{\preceq}(g')\,|\,g'\in\mathcal{G}'\right\rangle_{R_{n}}\\
&=\left\langle\IInc(\NN)_{N,n}\cdot\iin_{\preceq}(g)\,|\,g\in\mathcal{G}\right\rangle_{R_{n}}\\
&\subseteq\left\langle\IInc(\NN)_{N,n}\cdot\iin_{\preceq}(J_{N})\right\rangle_{R_{n}},
\end{align*}
where the $\IInc(\NN)$-compatibility of $\preceq$ guarantees the validity of the second identity.\newline
So let $n>2N$. As $\mathcal{G}$ generates $J_{N}$ and $N\geq\IInd(J_\circ)$, $\mathcal{G}'$ is a generating set for $J_n$ by Lemma \ref{weitere aspekte: gr"o"sere indizes erf"ullen stabilit"atsbedingung}. Thus, we only have to show that the $S$-polynomials of the elements of $\mathcal{G}'$ reduce to zero with respect to $\mathcal{G}'$. Choose $f',g'\in\mathcal{G}'$ and write $f'=p_1\cdot f$, $g'=p_2\cdot g$ with $p_1,p_2\in\IInc(\NN)_{N,n}$ and $f,g\in\mathcal{G}$. Let $j_1<...<j_{N}\leq n$, $k_{1}<...<k_{N}\leq n$ be natural numbers satisfying $p_{1}([N])=\left\lbrace j_{1},...,j_{N}\right\rbrace$ and $p_{2}([N])=\left\lbrace k_{1},...,k_{N}\right\rbrace$ and let $i_{1}<...<i_{2N}\leq n$ be natural numbers such that $\left\lbrace j_{1},...,j_{N}\right\rbrace\cup\left\lbrace k_{1},...,k_{N}\right\rbrace\subseteq\left\lbrace i_{1},...,i_{2N}\right\rbrace$. Define the map $p$ by
\begin{equation*}
p(j):=\left\lbrace\begin{array}{ll}
i_{j}&,j\in[2N]\\
n+j&,j>2N
\end{array}\right..
\end{equation*}
Then $p$ is an element of $\IInc(\NN)_{2N,n}$ satisfying $p_{1}([N]),p_{2}([N])\subseteq p(\NN)$. We first want to show that $p^{-1}\cdot f'$ and $p^{-1}\cdot g'$ lie in $J_{2N}$. Due to the $\IInc(\NN)$-invariance of $J_\circ$, this can be achieved by proving that the maps $(p^{-1}\circ p_{1})_{|[N]}$ and $(p^{-1}\circ p_{2})_{|[N]}$ can be extended to elements from $\IInc(\NN)_{N,2N}$. As $p_1$, $p_2$ and $(p_{|p(\NN)})^{-1}$ are strictly increasing, the same is true for the restrictions of $p^{-1}\circ p_{1}$ and $p^{-1}\circ p_{2}$ to $[N]$. Furthermore, we have
\begin{equation*}
p^{-1}(p_{1}(N))\leq p^{-1}(i_{2N})=2N
\end{equation*}
and the analogous inequality holds for $p^{-1}(p_{2}(N))$. Thus, defining $q_1$ and $q_2$ as
\begin{equation*}
q_{1}(i):=\left\lbrace\begin{array}{ll}
(p^{-1}\circ p_{1})(i)&,i\in[N]\\
2N+i&,i>N
\end{array}\right.,~
q_{2}(i):=\left\lbrace\begin{array}{ll}
(p^{-1}\circ p_{2})(i)&,i\in[N]\\
2N+i&,i>N
\end{array}\right.
\end{equation*}
yields the desired extensions, and we conclude $p^{-1}\cdot f',p^{-1}\cdot g'\in J_{2N}$.\newline
Recall that for polynomials $h_1,h_2\in R$, the $S$-polynomial $S(h_1,h_2)$ of $h_1$ and $h_2$ with respect to the term order $\preceq$ is defined as
\begin{equation*}
S(h_1,h_2)=\lcm(\iin_{\preceq}(h_1),\iin_{\preceq}(h_2))\left(\frac{h_1}{\lt_{\preceq}(h_1)}-\frac{h_2}{\lt_{\preceq}(h_2)}\right),
\end{equation*}
where $\lcm(\iin_{\preceq}(h_1),\iin_{\preceq}(h_2))$ stands for the least common multiple of $\iin_{\preceq}(h_1)$ and $\iin_{\preceq}(h_2)$ and $\lt_\preceq(h_i)$ denotes the leading term of $h_i$, i.e. the product of the leading monomial of $h_i$ with respect to $\preceq$ and its coefficient in $h_i$. Due to the $\IInc(\NN)$-compatibility of $\preceq$, the $S$-polynomial of $p^{-1}\cdot f'$ and $p^{-1}\cdot g'$ satisfies
\begin{align}
\nonumber S(p^{-1}\cdot f',p^{-1}\cdot g')&=\lcm(\iin_{\preceq}(p^{-1}\cdot f'),\iin_{\preceq}(p^{-1}\cdot g'))\left(\frac{p^{-1}\cdot f'}{\lt_{\preceq}(p^{-1}\cdot f')}-\frac{p^{-1}\cdot g'}{\lt_{\preceq}(p^{-1}\cdot g')}\right)\\
\nonumber&=\lcm(p^{-1}\cdot\iin_{\preceq}(f'),p^{-1}\cdot\iin_{\preceq}(g'))\left(\frac{p^{-1}\cdot f'}{p^{-1}\cdot\lt_{\preceq}(f')}-\frac{p^{-1}\cdot g'}{p^{-1}\cdot\lt_{\preceq}(g')}\right)\\
\nonumber&=p^{-1}\cdot\left[\lcm(\iin_{\preceq}(f'),\iin_{\preceq}(g'))\left(\frac{f'}{\lt_{\preceq}(f')}-\frac{g'}{\lt_{\preceq}(g')}\right)\right]\\
&=p^{-1}\cdot S(f',g').\label{identity of s-polynomials}
\end{align}
As both $p^{-1}\cdot f'$ and $p^{-1}\cdot g'$ are contained in $J_{2N}$, this is also true for $S(p^{-1}\cdot f',p^{-1}\cdot g')$. Furthermore, by assumption and the $\IInc(\NN)$-compatibility of $\preceq$, the set $\IInc(\NN)_{N,2N}\cdot\mathcal{G}$ is a Gr\"obner basis of $J_{2N}$ with respect to $\preceq$. Therefore, $S(p^{-1}\cdot f',p^{-1}\cdot g')$ reduces to zero with respect to $\IInc(\NN)_{N,2N}\cdot\mathcal{G}$, i.e. it can be written as
\begin{equation*}
S(p^{-1}\cdot f',p^{-1}\cdot g')=\sum_{i=1}^{r}h_{i}\left(q'_{i}\cdot g_{i}\right)
\end{equation*}
with $h_{i}\in R_{2N}$, $q'_{i}\in\IInc(\NN)_{N,2N}$, $g_{i}\in\mathcal{G}$ and $\iin_{\preceq}(S(p^{-1}\cdot f',p^{-1}\cdot g'))\succeq\iin_{\preceq}(h_{i}(q'_{i}\cdot g_{i}))$ for all $i\in[r]$. This yields
\begin{gather*}
\iin_{\preceq}(p\cdot S(p^{-1}\cdot f',p^{-1}\cdot g'))=p\cdot\iin_{\preceq}(S(p^{-1}\cdot f',p^{-1}\cdot g'))\\
\succeq p\cdot\iin_{\preceq}(h_{i}(q'_{i}\cdot g_{i}))=\iin_{\preceq}((p\cdot h_{i})((p\circ q'_{i})\cdot g_{i}))
\end{gather*}
for all $i\in[r]$. As by equation (\ref{identity of s-polynomials}), we have
\begin{equation*}
S(f',g')=p\cdot S(p^{-1}\cdot f',p^{-1}\cdot g')=\sum_{i=1}^{r}(p\cdot h_{i})((p\circ q'_{i})\cdot g_{i}),
\end{equation*}
and $p\circ q_i'\in\IInc(\NN)_{N,n}$ by Lemma \ref{dekomposition}, we conclude that $S(f',g')$ reduces to zero with respect to $\mathcal{G}'$.
\end{proof}

\section{Proof of Theorem \ref{theorem: boundedness of stability indices} and implications}\label{section 1}

\begin{proof}[Proof of Theorem \ref{theorem: boundedness of stability indices}]
By contradiction, assume the existence of a sequence $(\preceq_{n})_{n\geq 1}$ of $\IInc(\NN)$-compatible term orders on $R$ with $\lim_{n\rightarrow\infty}\IInd(\iin_{\preceq_{n}}(J_{\circ}))=\infty$. Set $N_0:=\IInd(J_\circ)$ and $N_i:=2N_{i-1}$ for $i\geq 1$. We claim that there is a collection $(\preceq^i_n)_{n\geq 1}$ of infinite subsequences of $(\preceq_n)_{n\geq 1}$, where $i$ ranges over $\NN\cup\lbrace 0\rbrace$, such that
\begin{compactenum}[(1)]
\item $(\preceq^{i}_n)_{n\geq 1}$ is a subsequence of $(\preceq^{i-1}_n)_{n\geq 1}$ for all $i\geq 1$;
\item $\iin_{\preceq_n^{i}}(J_{N_{i}})\subsetneq\langle\IInc(\NN)_{N_{i-1},N_i}\cdot\iin_{\preceq^i_n}(J_{N_{i-1}})\rangle_{R_{N_i}}$ for all $i,n\geq 1$;
\item $\iin_{\preceq_n^{i}}(J_{N_{i}})=\iin_{\preceq_1^{i}}(J_{N_{i}})$ for all $i,n\geq 1$.
\end{compactenum}
Indeed, we can construct these subsequences as follows: Set $(\preceq^0_n)_{n\geq 1}:=(\preceq_n)_{n\geq 1}$. By induction, assume that the subsequence $(\preceq^i_n)_{n\geq 1}$ has already been defined for some $i\geq 0$. Then $\lim_{n\rightarrow\infty}\IInd(\iin_{\preceq^i_{n}}(J_{\circ}))=\infty$, so in particular, there are infinitely many indices $n$ such that $\IInd(\iin_{\preceq^i_{n}}(J_{\circ}))>N_{i+1}$. By Proposition \ref{konstanzphasen}, these indices satisfy $\iin_{\preceq_n^i}(J_{N_{i+1}})\subsetneq\langle\IInc(\NN)_{N_{i},N_{i+1}}\cdot\iin_{\preceq^i_n}(J_{N_{i}})\rangle_{R_{N_{i+1}}}$. Hence, we obtain an infinite subsequence of $(\preceq^i_n)_{n\geq 1}$ satisfying (2). As the total number of initial ideals of $J_{N_{i+1}}$ is finite, this subsequence contains another infinite subsequence $(\preceq^{i+1}_n)_{n\geq 1}$ such that $\iin_{\preceq^{i+1}_n}(J_{N_{i+1}})=\iin_{\preceq^{i+1}_1}(J_{N_{i+1}})$ for all $n$ and we are done.\newline
For every $i\geq 1$, choose a monomial $g_i\in\iin_{\preceq^i_1}(J_{N_i})$ that is not contained in $\langle\IInc(\NN)_{N_{i-1},N_i}\cdot\iin_{\preceq^i_1}(J_{N_{i-1}})\rangle_{R_{N_i}}$. Then for any pair $i<j$ of natural numbers, we have
\begin{align*}
g_j&\not\in\left\langle\IInc(\NN)_{N_{j-1},N_j}\cdot\iin_{\preceq^j_1}(J_{N_{j-1}})\right\rangle_{R_{N_j}}\\
&\supseteq\left\langle\IInc(\NN)_{N_{j-1},N_j}\cdot(\IInc(\NN)_{N_i,N_{j-1}}\cdot\iin_{\preceq^j_1}(J_{N_{i}}))\right\rangle_{R_{N_j}}\\
&=\left\langle\IInc(\NN)_{N_{j-1},N_j}\cdot(\IInc(\NN)_{N_i,N_{j-1}}\cdot\iin_{\preceq^i_1}(J_{N_{i}}))\right\rangle_{R_{N_j}}\\
&=\left\langle\IInc(\NN)_{N_i,N_j}\cdot\iin_{\preceq^i_1}(J_{N_{i}})\right\rangle_{R_{N_j}}\\
&\supseteq\left\langle\IInc(\NN)_{N_i,N_j}\cdot g_i\right\rangle_{R_{N_j}},
\end{align*}
where we used properties (1) and (3) in the third and Lemma \ref{dekomposition} in the fourth line. But by Lemma \ref{c mal NN: stabilit"at monomialer idealketten}, such a sequence $(g_i)_{i\geq 1}$ cannot exist, and we have arrived at a contradiction.
\end{proof}

\begin{bemerkung}
The global stability index $\IInd(J_\circ)$ of an $\IInc(\NN)$-invariant ideal chain can be smaller than $\max(\mathrm{I}(J_\circ))$: Let $c=1$, $J_1=J_2=J_3=\lbrace 0\rbrace$, $J_4=\langle x_1+x_3\rangle_{R_4}$ and $J_n=\langle\IInc(\NN)_{4,n}\cdot J_4\rangle_{R_n}$ for $n\geq 5$. Let $\preceq$ be any $\IInc(\NN)$-compatible term order satisfying $x_n\preceq x_{n+1}$ for all $n\in\NN$. As $(x_2+x_4)-(x_1+x_4)=x_2-x_1$ lies in $J_5$, we conclude that $x_2\in\iin_\preceq(J_5)$. On the other hand, we have $\iin_\preceq(J_4)=\langle x_3\rangle_{R_4}$. Thus, $x_2$ is not an element of $\langle\IInc(\NN)_{4,5}\cdot\iin_{\preceq}(J_4)\rangle_{R_5}$, so $\IInd(\iin_\preceq(J_\circ))>4=\IInd(J_\circ)$.
\label{example: global index smaller than maximum}
\end{bemerkung}

We next want to study some of the consequences of Theorem \ref{theorem: boundedness of stability indices}, which include the statements on finiteness of numbers of initial ideals and initial ideal chains described in the introduction of this article. To this end, we need a few more preparations. Setting
\begin{equation*}
S_n:=\left\lbrace\sigma:\NN\rightarrow\NN\,|\,\sigma\text{ is bijective, }\sigma(i)=i\text{ for all }i\geq n+1\right\rbrace
\end{equation*}
and $S_\infty:=\bigcup_{n\geq 1}S_n$, we can define an action of $S_\infty$ on $R$ by $\KK$-linear extension of the maps
\begin{equation*}
\sigma\cdot(x_{i_1,j_1}^{e_1}\cdot...\cdot x_{i_r,j_r}^{e_r}):=x_{i_1,\sigma(j_1)}^{e_1}\cdot...\cdot x_{i_r,\sigma(j_r)}^{e_r}
\end{equation*}
for every $\sigma\in S_\infty$. There is the following inclusion of orbits:

\begin{lemma}[cf. \cite{nr}, Lemma 7.5]
For every pair of natural numbers $m\leq n$ and $f\in R_m$, we have $\IInc(\NN)_{m,n}\cdot f\subseteq S_n\cdot f$.
\label{c mal NN: inklusion der bahnen}
\end{lemma}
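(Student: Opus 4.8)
The plan is to reduce everything to the observation that the action of a map on a fixed polynomial $f$ depends only on how that map permutes the finitely many second indices actually occurring in $f$. Since the action of both monoids is $\KK$-linear, it suffices to treat a single monomial, and since $f\in R_m$, every variable occurring in $f$ has second index in $[m]$. So, given an arbitrary $p\in\IInc(\NN)_{m,n}$, my goal is to produce a single $\sigma\in S_n$ that agrees with $p$ on $[m]$; then $\sigma\cdot f=p\cdot f$ automatically, which gives the claimed inclusion $\IInc(\NN)_{m,n}\cdot f\subseteq S_n\cdot f$.

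First I would record the two structural facts about $p$ that make this possible. Because $p$ is strictly increasing on $\NN$, an easy induction gives $p(j)\geq j$ for all $j$, so in particular $m\leq p(m)$; combined with the defining condition $p(m)\leq n$, we see that $p$ restricts to a strictly increasing, hence injective, map $p|_{[m]}\colon[m]\to[n]$. Thus $p([m])$ is an $m$-element subset of $[n]$. I would then build $\sigma$ by hand: set $\sigma(j):=p(j)$ for $j\in[m]$, set $\sigma(j):=j$ for all $j>n$, and on the remaining block $\{m+1,\dots,n\}$ choose any bijection onto the remaining values $[n]\setminus p([m])$.

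To check that this really defines an element of $S_n$, the one point needing care is the counting argument guaranteeing that the chosen bijection on the middle block exists. The domain block $\{m+1,\dots,n\}$ has $n-m$ elements, and the target $[n]\setminus p([m])$ also has $n-\lvert p([m])\rvert=n-m$ elements, so a bijection between them exists precisely because $m\leq n$; this is exactly the hypothesis of the lemma. Assembling the three pieces yields a bijection $\sigma\colon\NN\to\NN$ fixing every index $\geq n+1$, i.e. $\sigma\in S_n$, and by construction $\sigma|_{[m]}=p|_{[m]}$.

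Finally, since $f\in R_m$ involves only second indices lying in $[m]$, and $\sigma$ and $p$ act identically on those indices, the two substitutions produce the same monomial, so $\sigma\cdot f=p\cdot f$. As $p\in\IInc(\NN)_{m,n}$ was arbitrary, this establishes the inclusion. I do not expect a genuine obstacle here: the entire content is the elementary cardinality count of the previous paragraph, and the only thing one must be slightly attentive to is that $p|_{[m]}$ genuinely lands in $[n]$ (which is where both $p(m)\leq n$ and the monotonicity bound $p(m)\geq m$ are used).
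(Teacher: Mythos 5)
Your proof is correct. Note that the paper itself gives no proof of this lemma at all: it is quoted from the reference \cite{nr} (Lemma 7.5), so there is no in-paper argument to compare against. Your construction -- restrict $p$ to $[m]$, observe that $p([m])$ is an $m$-element subset of $[n]$ because $p$ is strictly increasing and $p(m)\leq n$, and then extend $p|_{[m]}$ to a permutation $\sigma\in S_n$ by sending the block $\lbrace m+1,\dots,n\rbrace$ bijectively onto the $(n-m)$-element complement $[n]\setminus p([m])$ and fixing everything beyond $n$ -- is precisely the standard argument behind the cited result, and the conclusion $\sigma\cdot f=p\cdot f$ for $f\in R_m$ follows exactly as you say, since the action of a map on $f$ only depends on its values on the second indices occurring in $f$. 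One minor simplification: the monotonicity bound $p(j)\geq j$ is not actually needed anywhere; to see that $p([m])\subseteq[n]$ it suffices that $p(j)\leq p(m)\leq n$ for $j\in[m]$ by monotonicity, and the cardinality count for the middle block uses only the hypothesis $m\leq n$. This is a harmless redundancy, not a gap.
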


Lemma \ref{c mal NN: inklusion der bahnen} ensures that every $S_\infty$-invariant ideal chain $J_\circ=J_1\subseteq J_2\subseteq...$ in $R$, i.e. every ideal chain satisfying $S_n\cdot J_m\subseteq J_n$ for all $m\leq n$, is also $\IInc(\NN)$-invariant. Note that the ideals $J_n$ of an $S_\infty$-invariant ideal chain are themselves $S_n$-invariant, i.e. they satisfy $S_n\cdot J_n\subseteq J_n$.\newline
For any subset $A\subseteq\NN$, let $R_A$ be the polynomial ring over $\KK$ in the variables indexed by $[c]\times A$.

\begin{lemma}
Let $J\subseteq R_n$ be an ideal satisfying $S_n\cdot J\subseteq J$, $m\leq n$ and $p\in\IInc(\NN)_{m,n}$. Then
\begin{equation*}
p\cdot\left(J\cap R_{m}\right)=J\cap R_{p([m])}.
\end{equation*}
In particular, for every $f\in J\cap R_m$, the polynomial $p\cdot f$ is contained in $J$.
\label{weitere aspekte: inc-produkte liegen in ideal}
\end{lemma}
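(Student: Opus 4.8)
The plan is to realize the monoid element $p$, which acts as a genuine bijection only on the finite index set $[m]$, as the restriction of an honest permutation $\sigma\in S_n$, so that the hypothesis $S_n\cdot J\subseteq J$ can be brought to bear. First I would observe that since $p\in\IInc(\NN)_{m,n}$ is strictly increasing with $p(m)\leq n$, we have $p(j)\leq p(m)\leq n$ for all $j\in[m]$, so that $p([m])\subseteq[n]$ and $p|_{[m]}$ is an injection of $[m]$ into $[n]$. Any such injection extends to a bijection of $[n]$, and extending further by the identity on all indices $\geq n+1$ produces a permutation $\sigma\in S_n$ with $\sigma(j)=p(j)$ for every $j\in[m]$. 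The key elementary observation is then that, because every monomial of a polynomial in $R_m$ involves only variables $x_{i,j}$ with $j\in[m]$, and $\sigma$ agrees with $p$ on $[m]$, the two actions coincide on such polynomials: $\sigma\cdot f=p\cdot f$ for all $f\in R_m$.

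With this in hand the forward inclusion is immediate. Given $f\in J\cap R_m$, the polynomial $p\cdot f=\sigma\cdot f$ lies in $J$ by the $S_n$-invariance of $J$, while it visibly lies in $R_{p([m])}$ since $p$ relabels the indices in $[m]$ by indices in $p([m])$. This simultaneously establishes the ``in particular'' clause and yields $p\cdot(J\cap R_m)\subseteq J\cap R_{p([m])}$.

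For the reverse inclusion I would run the same idea with the inverse permutation $\sigma^{-1}\in S_n$. Given $g\in J\cap R_{p([m])}$, set $f:=\sigma^{-1}\cdot g$. Since $\sigma([m])=p([m])$, the permutation $\sigma^{-1}$ carries the index set $p([m])$ back into $[m]$, so $f\in R_m$; and $f\in J$ by the $S_n$-invariance of $J$, whence $f\in J\cap R_m$. Finally, using that $f\in R_m$ and that $\sigma$ agrees with $p$ on $[m]$, together with the fact that $S_n$ acts by a genuine group action, I get $p\cdot f=\sigma\cdot f=\sigma\cdot(\sigma^{-1}\cdot g)=g$, so that $g\in p\cdot(J\cap R_m)$. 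Combining the two inclusions gives the claimed equality.

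The only point requiring genuine care is the bookkeeping that the permutation actions of $\sigma$ and $\sigma^{-1}$ really reproduce the monoid action of $p$ on the relevant polynomials; this rests entirely on the fact that a polynomial in $R_m$ (respectively $R_{p([m])}$) only involves indices in $[m]$ (respectively $p([m])$), on which $\sigma$ and $p$ agree. I do not expect any deeper input to be needed: the substance of the lemma is precisely that, on a single $S_n$-orbit, the one-sided monoid action of $\IInc(\NN)_{m,n}$ can be replaced by an invertible symmetric-group action, and it is this invertibility that makes the reverse inclusion work.
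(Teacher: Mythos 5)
Your proof is correct and takes essentially the same route as the paper: your explicit extension of $p|_{[m]}$ to a permutation $\sigma\in S_n$ is exactly the content of Lemma \ref{c mal NN: inklusion der bahnen}, which the paper invokes for the forward inclusion, and the reverse inclusion in the paper is likewise handled by choosing $\sigma\in S_n$ with $\sigma|_{[m]}=p|_{[m]}$ and applying $\sigma^{-1}$ together with the $S_n$-invariance of $J$.
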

\begin{proof}
The inclusion $p\cdot\left(J\cap R_{m}\right)\subseteq J\cap R_{p([m])}$ follows from the $S_n$-invariance of $J$ and from Lemma \ref{c mal NN: inklusion der bahnen}. Conversely, let $f\in J\cap R_{p([m])}$ and $\sigma\in S_n$ satisfying $\sigma_{|[m]}=p_{|[m]}$. Then $\sigma^{-1}_{|p([m])}=p^{-1}_{|p([m])}$, and due to the $S_n$-invariance of $J$ we obtain
\begin{equation*}
f=p\cdot\left(p^{-1}\cdot f\right)=p\cdot\left(\sigma^{-1}\cdot f\right)\in p\cdot\left(J\cap R_{m}\right).
\belowdisplayskip=0pt
\end{equation*}
\end{proof}

\begin{lemma}
Let $n\in\NN$ and $J\subseteq R_n$ be an ideal. Then for any $\IInc(\NN)$-compatible term order $\preceq$, the following identity holds:
\begin{equation}
p\cdot\iin_\preceq(J)=\iin_\preceq(p\cdot J).
\label{equation: commutation of Inc and in}
\end{equation}
In this equation, $p\cdot J$ is regarded as an ideal of $R_{p([n])}$.
\label{lemma: Inc and formation of initial ideals commute}
\end{lemma}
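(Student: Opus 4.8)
The plan is to reduce everything to the single observation that $p$ acts as an isomorphism of polynomial rings. Since $p$ is strictly increasing, the assignment $x_{i,j}\mapsto x_{i,p(j)}$ for $j\in[n]$ extends to a $\KK$-algebra isomorphism from $R_n$ onto $R_{p([n])}$. Under this isomorphism the image of the ideal $J$ is exactly the set $\lbrace p\cdot f\,|\,f\in J\rbrace$, which is therefore already an ideal of $R_{p([n])}$; this is the precise meaning of ``$p\cdot J$ regarded as an ideal of $R_{p([n])}$''. More importantly, any ring isomorphism carries a generated ideal $\langle S\rangle$ to the ideal generated by the images of $S$, so we have $p\cdot\langle S\rangle_{R_n}=\langle p\cdot S\rangle_{R_{p([n])}}$ for every subset $S\subseteq R_n$. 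I would record this as the one structural input the argument needs.

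With this in hand the verification is a short computation on generating sets. By definition $\iin_\preceq(J)=\langle\iin_\preceq(f)\,|\,f\in J\rangle_{R_n}$, so applying $p$ and using the identity $p\cdot\langle S\rangle_{R_n}=\langle p\cdot S\rangle_{R_{p([n])}}$ gives
\begin{equation*}
p\cdot\iin_\preceq(J)=\langle p\cdot\iin_\preceq(f)\,|\,f\in J\rangle_{R_{p([n])}}.
\end{equation*}
The $\IInc(\NN)$-compatibility of $\preceq$, already recorded in the introduction as $\iin_\preceq(p\cdot f)=p\cdot\iin_\preceq(f)$, then lets me replace each generator $p\cdot\iin_\preceq(f)$ by $\iin_\preceq(p\cdot f)$. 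Finally, as $f$ ranges over $J$ the polynomial $p\cdot f$ ranges over all of the ideal $p\cdot J$, so the generating sets agree and
\begin{equation*}
p\cdot\iin_\preceq(J)=\langle\iin_\preceq(p\cdot f)\,|\,f\in J\rangle_{R_{p([n])}}=\langle\iin_\preceq(g)\,|\,g\in p\cdot J\rangle_{R_{p([n])}}=\iin_\preceq(p\cdot J),
\end{equation*}
which is the claimed identity.

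There is no serious obstacle here, so the only points that need care are bookkeeping ones. First, I must ensure that the leading monomial is taken with respect to one and the same order on both sides: since $R_{p([n])}$ is a subring of $R$ and the leading monomial of an element of $R_{p([n])}$ is one of the monomials actually occurring in it, $\iin_\preceq$ restricted to $R_{p([n])}$ coincides with the ambient $\iin_\preceq$, so no inconsistency arises. Second, I should stress that $J$ is an \emph{arbitrary} ideal of $R_n$ (no $S_n$-invariance is assumed) and that $p\in\IInc(\NN)$ is arbitrary, so the statement rests genuinely only on the isomorphism property of $p$ together with the compatibility of $\preceq$. The subtlety worth flagging, rather than a true difficulty, is the identification of the set $\lbrace p\cdot f\,|\,f\in J\rbrace$ with the ideal $p\cdot J$, which is exactly what the isomorphism viewpoint supplies.
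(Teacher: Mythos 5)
Your proof is correct and follows essentially the same route as the paper: both sides are identified as ideals of $R_{p([n])}$ generated by $\lbrace p\cdot\iin_\preceq(f)\,|\,f\in J\rbrace$ and $\lbrace\iin_\preceq(p\cdot f)\,|\,f\in J\rbrace$ respectively, and the $\IInc(\NN)$-compatibility identity $\iin_\preceq(p\cdot f)=p\cdot\iin_\preceq(f)$ shows these generating sets coincide. Your additional justification via the $\KK$-algebra isomorphism $R_n\rightarrow R_{p([n])}$ merely makes explicit what the paper leaves implicit.
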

\begin{proof}
The left side of equation (\ref{equation: commutation of Inc and in}) is generated, as an ideal of $R_{p([n])}$, by the set $G_1:=\lbrace p\cdot\iin_\preceq(f)\,|\,f\in J\rbrace$, whereas the right side is generated by $G_2:=\lbrace\iin_\preceq(p\cdot f)\,|\,f\in J\rbrace$. The $\IInc(\NN)$-compatibility of $\preceq$ yields $G_1=G_2$, and the identity in (\ref{equation: commutation of Inc and in}) follows.
\end{proof}

\begin{satz}
For an $\IInc(\NN)$-invariant ideal chain $J_{\circ}$, the following statements are equivalent:
\begin{compactenum}[(1)]
\item $\mathrm{I}(J_\circ)$ is bounded above.
\item The set of ideal chains $\lbrace\iin_{\preceq}(J_\circ)\,|\,\preceq\text{ is an }\IInc(\NN)\text{-compatible term order on }R\rbrace$ is finite.
\end{compactenum}
Furthermore, the two above statements imply:
\begin{compactenum}[(3)]
\item There is $N\in\NN$ such that
\begin{equation}
\iin_{\preceq}(J_{n})=\sum_{1\leq i_{1}<...<i_{N}\leq n}\left\langle\iin_{\preceq}(J_{n}\cap R_{\lbrace i_{1},...,i_{N}\rbrace})\right\rangle_{R_{n}}
\label{equation: initial ideals generated by initial ideals of intersections}
\end{equation}
for all $n\geq N$ and every $\IInc(\NN)$-compatible term order $\preceq$ on $R$. Here, we regard the intersections $J_n\cap R_{\lbrace i_{1},...,i_{N}\rbrace}$ as ideals of $R_{\lbrace i_{1},...,i_{N}\rbrace}$.
\end{compactenum}
If $J_\circ$ is not only $\IInc(\NN)$- but also $S_\infty$-invariant then (3) is equivalent to (1) and (2).
\label{weitere aspekte: charakterisierungen}
\end{satz}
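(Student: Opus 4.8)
The plan is to prove the cycle (1)$\Rightarrow$(2)$\Rightarrow$(1) together with (1)$\Rightarrow$(3), and then, under the additional hypothesis of $S_\infty$-invariance, to close the loop with (3)$\Rightarrow$(1); the first three implications are soft, and the last is the heart of the matter. For (1)$\Rightarrow$(2), if $N$ bounds $\mathrm{I}(J_\circ)$ then every chain $\iin_\preceq(J_\circ)$ stabilizes by index $N$ and is therefore completely determined, via Lemma \ref{weitere aspekte: gr"o"sere indizes erf"ullen stabilit"atsbedingung} applied to the initial chain, by the tuple $(\iin_\preceq(J_1),\dots,\iin_\preceq(J_N))$. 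Since each entry $\iin_\preceq(J_i)$ ranges over the finite set of initial ideals of the \emph{fixed} ideal $J_i\subseteq R_i$ in the Noetherian ring $R_i$ (\cite{rob}, Lemma 2.6), only finitely many tuples, and hence finitely many chains, occur. The converse (2)$\Rightarrow$(1) is immediate, since $\mathrm{I}(J_\circ)$ is the image of the finite set of initial chains under $\iin_\preceq(J_\circ)\mapsto\IInd(\iin_\preceq(J_\circ))$.

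For (1)$\Rightarrow$(3) I would take $N\geq\max(\IInd(J_\circ),\max\mathrm{I}(J_\circ))$. The inclusion $\supseteq$ in (\ref{equation: initial ideals generated by initial ideals of intersections}) is automatic, because $J_n\cap R_A\subseteq J_n$ forces $\iin_\preceq(J_n\cap R_A)\subseteq\iin_\preceq(J_n)$ for every index set $A$. For $\subseteq$, stability at $N$ gives $\iin_\preceq(J_n)=\langle\IInc(\NN)_{N,n}\cdot\iin_\preceq(J_N)\rangle_{R_n}$, and each generator $p\cdot\iin_\preceq(f)=\iin_\preceq(p\cdot f)$ (with $p\in\IInc(\NN)_{N,n}$ and $f\in J_N$) satisfies $p\cdot f\in J_n\cap R_{p([N])}$, so that this monomial already lies in the summand $\iin_\preceq(J_n\cap R_{p([N])})$ on the right-hand side.

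The crux is (3)$\Rightarrow$(1) under $S_\infty$-invariance. Here I would first recast the right-hand side of (\ref{equation: initial ideals generated by initial ideals of intersections}) as a single orbit ideal. For each $N$-subset $A\subseteq[n]$ let $p_A\in\IInc(\NN)_{N,n}$ be the increasing map with $p_A([N])=A$. Since each $J_n$ is $S_n$-invariant, Lemma \ref{weitere aspekte: inc-produkte liegen in ideal} gives $J_n\cap R_A=p_A\cdot(J_n\cap R_N)$, whence $\iin_\preceq(J_n\cap R_A)=p_A\cdot\iin_\preceq(J_n\cap R_N)$ by Lemma \ref{lemma: Inc and formation of initial ideals commute}. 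As $p\cdot\iin_\preceq(J_n\cap R_N)$ depends only on $p|_{[N]}$, summing over all $A$ reconstitutes the entire orbit, and (3) becomes
\[
\iin_\preceq(J_n)=\langle\IInc(\NN)_{N,n}\cdot\iin_\preceq(J_n\cap R_N)\rangle_{R_n}\qquad(n\geq N).
\]

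Now comes the decisive point: $(J_n\cap R_N)_{n\geq N}$ is an ascending chain of ideals in the Noetherian ring $R_N$, so it stabilizes at some index $M$ depending only on $J_\circ$ and $N$, \emph{not} on $\preceq$; write $\widetilde J_N:=J_M\cap R_N$ for its stable value. For $n\geq M$ the displayed identity reads $\iin_\preceq(J_n)=\langle\IInc(\NN)_{N,n}\cdot\iin_\preceq(\widetilde J_N)\rangle_{R_n}$, and comparing the cases $n$ and $M$ through $\IInc(\NN)_{N,n}=\IInc(\NN)_{M,n}\circ\IInc(\NN)_{N,M}$ (Lemma \ref{dekomposition}) yields $\iin_\preceq(J_n)=\langle\IInc(\NN)_{M,n}\cdot\iin_\preceq(J_M)\rangle_{R_n}$ for all $n\geq M$. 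Hence $\IInd(\iin_\preceq(J_\circ))\leq M$ for every $\IInc(\NN)$-compatible $\preceq$, which is (1). I expect the main obstacle to be precisely this uniformity step: one must realise that (3) can be rewritten in terms of the single, $\preceq$-independent chain $(J_n\cap R_N)_n$, for only then does the Noetherianity of $R_N$ supply a stabilization index $M$ that is valid simultaneously for all term orders.
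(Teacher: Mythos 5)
Your proposal is correct and follows essentially the same route as the paper's proof: the same reduction of (1)$\Leftrightarrow$(2) to the finitely many initial ideals of $J_1,\dots,J_N$, the same Gr\"obner-free argument for (1)$\Rightarrow$(3), and for (3)$\Rightarrow$(1) the same key combination of Lemmas \ref{weitere aspekte: inc-produkte liegen in ideal}, \ref{lemma: Inc and formation of initial ideals commute} and \ref{dekomposition} together with the Noetherianity of $R_N$ applied to the $\preceq$-independent chain $(J_n\cap R_N)_{n\geq N}$ (your $M$ is the paper's $N'$). The only difference is cosmetic: you first repackage the sum in (\ref{equation: initial ideals generated by initial ideals of intersections}) as the single orbit ideal $\langle\IInc(\NN)_{N,n}\cdot\iin_\preceq(J_n\cap R_N)\rangle_{R_n}$ and then invoke the $n=M$ instance, whereas the paper decomposes each $p_{i_1,\dots,i_N}$ summand-by-summand, but the uniformity mechanism is identical.
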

\begin{proof}
We first show the equivalence of (1) and (2) for $\IInc(\NN)$-invariant ideal chains $J_{\circ}$. The implication (2) $\Rightarrow$ (1) is clear. For the reverse implication, let $N:=\max(\mathrm{I}(J_\circ))$. Then by Lemma \ref{weitere aspekte: gr"o"sere indizes erf"ullen stabilit"atsbedingung}, for $\IInc(\NN)$-compatible term orders $\preceq,\preceq'$ we have $\iin_\preceq(J_\circ)=\iin_{\preceq'}(J_\circ)$ if and only if $\iin_\preceq(J_n)=\iin_{\preceq'}(J_n)$ for all $n\in[N]$. As $J_1,....,J_N$ each have only finitely many initial ideals, there are only finitely many sequences $L_1\subseteq...\subseteq L_N$ such that $L_n=\iin_{\preceq}(J_n)$ for all $n$ for any term order $\preceq$ on $R$. Hence, assertion (2) follows.\\
Next, we show the implication (1) $\Rightarrow$ (3) for $\IInc(\NN)$-invariant $J_\circ$. Let again $N:=\max(\mathrm{I}(J_\circ))$ and choose any $\IInc(\NN)$-compatible term order $\preceq$ on $R$. Then by Remark \ref{weitere aspekte: gr"o"sere indizes erf"ullen stabilit"atsbedingung} and the $\IInc(\NN)$-compatibility of $\preceq$, $\iin_{\preceq}(J_n)$ is generated by
\begin{equation*}
\left\lbrace\iin_{\preceq}(p\cdot f)\,|\,p\in\IInc(\NN)_{N,n},f\in J_N\right\rbrace.
\end{equation*}
As $J_\circ$ is $\IInc(\NN)$-invariant, each of the polynomials $p\cdot f$ in the above set lies in one of the intersections $J_n\cap R_{\lbrace i_1,...,i_N\rbrace}$, where $i_1<...<i_N$ ranges over all strictly ascending sequences of $[n]$. This proves the inclusion $\subseteq$ in (\ref{equation: initial ideals generated by initial ideals of intersections}). The reverse inclusion is obvious.\newline
Now assume $J_\circ$ to be $S_\infty$-invariant and that (3) holds. By the Noetherianity of $R_N$, there is an index $N'\geq N$ such that $J_n\cap R_N=J_{N'}\cap R_N=:J$ for all $n\geq N'$. Let $n\geq N'$ and $\preceq$ be any $\IInc(\NN)$-compatible term order on $R$. For a sequence $1\leq i_1<...<i_N\leq n$, let $p_{i_1,...,i_N}\in\IInc(\NN)$ be any function satisfying $p_{i_1,...,i_N}([N])=\lbrace i_1,...,i_N\rbrace$. Then by Lemmata \ref{weitere aspekte: inc-produkte liegen in ideal} and \ref{lemma: Inc and formation of initial ideals commute}, we have
\begin{align*}
\iin_\preceq(J_n)&=\sum_{1\leq i_1<...<i_N\leq n}\left\langle\iin_\preceq(J_n\cap R_{\lbrace i_1,...,i_N\rbrace})\right\rangle_{R_n}\\
&=\sum_{1\leq i_1<...<i_N\leq n}\left\langle\iin_\preceq(p_{i_1,...,i_N}\cdot J)\right\rangle_{R_n}\\
&=\sum_{1\leq i_1<...<i_N\leq n}\left\langle p_{i_1,...,i_N}\cdot\iin_\preceq(J)\right\rangle_{R_n}.
\end{align*}
By Lemma \ref{dekomposition}, each of the $p_{i_1,...,i_N}$ has a decomposition $p_{i_1,...,i_N}=p^{(2)}_{i_1,...,i_N}\circ p^{(1)}_{i_1,...,i_N}$ with $p^{(1)}_{i_1,...,i_N}\in\IInc(\NN)_{N,N'}$ and $p^{(2)}_{i_1,...,i_N}\in\IInc(\NN)_{N',n}$. Thus, we obtain
\begin{align*}
&\sum_{1\leq i_1<...<i_N\leq n}\langle p_{i_1,...,i_N}\cdot\iin_\preceq(J)\rangle_{R_n}\\
&=\sum_{1\leq i_1<...<i_N\leq n}\langle p^{(2)}_{i_1,...,i_N}\cdot(p^{(1)}_{i_1,...,i_N}\cdot\iin_\preceq(J))\rangle_{R_n}\\
&=\sum_{1\leq i_1<...<i_N\leq n}\langle p^{(2)}_{i_1,...,i_N}\cdot\iin_\preceq(p^{(1)}_{i_1,...,i_N}\cdot J)\rangle_{R_n}\\
&\subseteq\langle\IInc(\NN)_{N',n}\cdot\iin_\preceq(J_{N'})\rangle_{R_n},
\end{align*}
where we again used Lemma \ref{lemma: Inc and formation of initial ideals commute} for the second identity and Lemma \ref{weitere aspekte: inc-produkte liegen in ideal} for the last inclusion. This shows that $\IInd(\iin_\preceq(J_\circ))\leq N'$ for any $\IInc(\NN)$-compatible term order $\preceq$ and (1) follows.
\end{proof}

\begin{bemerkung}
By equation (\ref{equation: initial ideals generated by initial ideals of intersections}), for every $\IInc(\NN)$-invariant ideal chain $J_\circ$ in $R$, there is a natural number $N$ such that for every $n\geq N$ and every $\IInc(\NN)$-compatible term order $\preceq$ on $R$, there is a Gr\"obner basis of $J_n$ with respect to $\preceq$ whose elements each contain no more than $cN$ distinct variables. This is not the case for arbitrary ideal chains in $R$. For instance, set $c=1$ and consider the ideal chain $J_\circ$ defined by
\begin{align*}
&J_1:=\lbrace 0\rbrace,\\
&J_{2^n}:=\langle J_{2^{n-1}},x_{2^{n-1}+1}+...+x_{2^n}\rangle_{R_{2^n}}\text{ for }n\geq 1,\\
&J_m:=J_{2^n}\text{ for }2^n\leq m<2^{n+1}.
\end{align*}
Then for any term order $\preceq$ on $R$ and $n\geq 1$, every polynomial $f\in J_{2^n}$ with $\iin_\preceq(f)\mid\iin_\preceq(x_{2^{n-1}+1}+...+x_{2^n})$ must contain a non-trivial $\KK$-multiple of $x_{2^{n-1}+1}+...+x_{2^n}$ and, hence, at least $2^{n-1}$ distinct variables.
\end{bemerkung}

\begin{korollar}
Let $J$ be an ideal of $R$ satisfying $\IInc(\NN)\cdot J\subseteq J$. Then $J$ has only finitely many initial ideals with respect to $\IInc(\NN)$-compatible term orders on $R$.
\label{corollary: finitely many initial ideals}
\end{korollar}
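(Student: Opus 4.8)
The plan is to reduce the statement about the single ideal $J$ to the finiteness results for ideal chains that have already been established. To this end I would associate to $J$ the ideal chain $J_\circ$ defined by $J_n := J\cap R_n$ for every $n\in\NN$. The first step is to check that $J_\circ$ is an $\IInc(\NN)$-invariant ideal chain: given $m\leq n$, $p\in\IInc(\NN)_{m,n}$ and $f\in J_m=J\cap R_m$, the hypothesis $\IInc(\NN)\cdot J\subseteq J$ gives $p\cdot f\in J$, while $p(m)\leq n$ gives $p\cdot f\in R_{p([m])}\subseteq R_n$; hence $p\cdot f\in J\cap R_n=J_n$, as required.

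The second and crucial step is to relate the initial ideal $\iin_\preceq(J)$ of $J$ in $R$ to the chain of initial ideals $\iin_\preceq(J_\circ)=(\iin_\preceq(J_n))_n$. Since every polynomial of $R$ involves only finitely many variables, each nonzero $f\in J$ lies in some $R_n$, hence in $J_n$, so that $\iin_\preceq(f)\in\iin_\preceq(J_n)$. Conversely, $J_n\subseteq J$ gives $\iin_\preceq(J_n)\subseteq\iin_\preceq(J)$. As $\iin_\preceq(J)$ is generated by the leading monomials $\iin_\preceq(f)$, $f\in J$, I conclude that $\iin_\preceq(J)=\langle\bigcup_{n\geq 1}\iin_\preceq(J_n)\rangle_R$, the increasing union of the initial ideals of the chain. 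In particular, $\iin_\preceq(J)$ is completely determined by the initial ideal chain $\iin_\preceq(J_\circ)$: two $\IInc(\NN)$-compatible term orders yielding the same chain yield the same initial ideal of $J$.

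The final step combines this with the earlier results. By Theorem \ref{theorem: boundedness of stability indices}, the set $\mathrm{I}(J_\circ)$ is bounded above, so by the equivalence of (1) and (2) in Theorem \ref{weitere aspekte: charakterisierungen} the set of initial ideal chains $\{\iin_\preceq(J_\circ)\,|\,\preceq\text{ is an }\IInc(\NN)\text{-compatible term order on }R\}$ is finite. Since by the second step the assignment $\iin_\preceq(J_\circ)\mapsto\iin_\preceq(J)$ is a well-defined map, the set of initial ideals $\{\iin_\preceq(J)\,|\,\preceq\}$ is the image of a finite set under this map and is therefore finite, which is the assertion.

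I do not expect a genuine obstacle here: the substantial work has already been carried out in Theorem \ref{theorem: boundedness of stability indices} and Theorem \ref{weitere aspekte: charakterisierungen}. The only point requiring a little care is the passage in the second step from the single ideal $J$ to its chain of finite-variable truncations $J\cap R_n$, namely the observation that $\iin_\preceq(J)$ is the direct limit of the $\iin_\preceq(J_n)$ and hence a function of the chain alone; everything else is a formal consequence.
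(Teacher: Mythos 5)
Your proposal is correct and follows essentially the same route as the paper: you form the truncation chain $J_n := J\cap R_n$, observe that it is $\IInc(\NN)$-invariant and that $\iin_\preceq(J)$ is generated by $\bigcup_{n\geq 1}\iin_\preceq(J_n)$ (so the initial ideal of $J$ is determined by the initial ideal chain), and then apply the finiteness of the set of initial ideal chains from Theorem \ref{weitere aspekte: charakterisierungen}(2) via Theorem \ref{theorem: boundedness of stability indices}. The only difference is that you spell out the invariance check and the reduction to the chain in more detail than the paper does, which is fine.
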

\begin{proof}
For every term order $\preceq$ on $R$, $\iin_\preceq(J)$ is generated by the union of all initial ideals $\iin_\preceq(J\cap R_n)\subseteq R_n$. Thus, if $\preceq,\preceq'$ are term orders on $R$ with $\iin_\preceq(J\cap R_n)=\iin_{\preceq'}(J\cap R_n)$ for all $n$, then $\iin_\preceq(J)=\iin_{\preceq'}(J)$. As the ideal chain $J_\circ:=J\cap R_1\subseteq J\cap R_2\subseteq...$ is $\IInc(\NN)$-invariant, Theorem \ref{weitere aspekte: charakterisierungen}(2) tells us that there exists a finite number of $\IInc(\NN)$-compatible term orders $\preceq_1,...,\preceq_N$ on $R$ such that for every $\IInc(\NN)$-compatible term order $\preceq$ on $R$ there is $i\in[N]$ with $\iin_\preceq(J_\circ)=\iin_{\preceq_i}(J_\circ)$. This proves our claim.
\end{proof}

\begin{bemerkung}
There is a more direct way to prove Corollary \ref{corollary: finitely many initial ideals} which does not rely on Theorem \ref{weitere aspekte: charakterisierungen}. Namely, one can transfer the proof of finiteness of the number of initial ideals for ideals in polynomial rings in finitely many variables given in \cite{rob}, Lemma 2.6, to the situation of $\IInc(\NN)$-invariant ideals in $R$: Just substitute the ideals $\mathrm{m_i}$ defined in \cite{rob}  for $\langle\IInc(\NN)\cdot\mathrm{m_i}\rangle_R$ and use the fact that $\IInc(\NN)$-divisibility in $R$ is a well-partial-order (\cite{hs}, Theorem 3.1) as a substitute for Noetherianity. This raises the question whether Theorem \ref{theorem: boundedness of stability indices} is just a simple consequence of Corollary \ref{corollary: finitely many initial ideals}.\newline
Indeed, for any $\IInc(\NN)$-invariant ideal chain $J_\circ$ in $R$, the ideal $J:=\bigcup_{n\geq 1}J_n$ is an $\IInc(\NN)$-invariant ideal of $R$, and for every term order $\preceq$ on $R$, we have $\iin_\preceq(J)=\bigcup_{n\geq 1}\iin_\preceq(J_n)$. Hence, Corollary \ref{corollary: finitely many initial ideals} yields
\begin{equation*}
\#\bigg\lbrace\bigcup_{n\geq 1}\iin_\prec(J_n)\,|\,\preceq\text{ is }\IInc(\NN)\text{-compatible}\bigg\rbrace<\infty.
\end{equation*}
However, Theorem \ref{theorem: boundedness of stability indices} provides more information than that: By Theorem \ref{weitere aspekte: charakterisierungen}(2), not only the number of unions of the initial ideals of the $J_n$ with respect to $\IInc(\NN)$-compatible term orders is finite, but also the number of sequences $(\iin_\preceq(J_n))_{n\geq 1}$ giving rise to the same union.
\end{bemerkung}

\begin{bemerkung}
Corollary \ref{corollary: finitely many initial ideals} does not hold for the number of initial ideals with respect to arbitrary term orders on $R$: Let $c=1$ and $J:=\langle\IInc(\NN)\cdot (x_1^2x_2+x_1x_2^2)\rangle_R$ be the ideal that is generated by the $\IInc(\NN)$-orbits of the polynomial $x_1^2x_2+x_1x_2^2$. For every $n\in\NN$, define the term order $\preceq_n$ by
\begin{gather*}
x_{\sigma_n(1)}^{a_1}\cdot...\cdot x_{\sigma_n(k)}^{a_k}\prec_n x_{\sigma_n(1)}^{b_1}\cdot...\cdot x_{\sigma_n(k)}^{b_k}:\Leftrightarrow a_{i}<b_{i}\text{ for }i=\min\lbrace j\,|\,a_j\neq b_j\rbrace,
\end{gather*}
where the map $\sigma_n\in S_{\infty}$ is defined by
\begin{equation*}
\sigma_n(j)=\left\lbrace\begin{array}{ll}
n+1-j&,\,j\leq n\\
j&,\,j>n
\end{array}\right..
\end{equation*}
For example, if $n=3$, then $(\sigma_3(1),\sigma_3(2),\sigma_3(3),\sigma_3(4),\sigma_3(5))=(3,2,1,4,5)$. We claim that for every pair $n<n'$ of natural numbers, $x_1^2x_{n'}\in\iin_{\preceq_{n}}(J)\setminus\iin_{\preceq_{n'}}(J)$. We have $\iin_{\preceq_n}(x_1^2x_{n'}+x_1x_{n'}^2)=x_1^2x_{n'}$ as $\sigma_n^{-1}(n')=n'>n=\sigma_n^{-1}(1)$, so $x_1^2x_{n'}\in\iin_{\preceq_n}(J)$. Let $f$ be a polynomial in $J$ that contains the monomial $x_1^2x_{n'}$. We may assume $f$ to be homogeneous, so $f=\sum_{i=1}^{k}c_ip_i\cdot(x_1^2x_2+x_1x_2^2)$ with $c_i\in\KK\setminus\lbrace 0\rbrace$ and $p_i\in\IInc(\NN)$, where $p_i\cdot(x_1^2x_2+x_1x_2^2)\neq p_j\cdot(x_1^2x_2+x_1x_2^2)$ for $i\neq j$.  As $f$ contains $x_1^2x_{n'}$, there is exactly one $i$ with $p_{i}\cdot(x_1^2x_2+x_1x_2^2)=x_1^2x_{n'}+x_1x_{n'}^2$. Therefore, $f$ contains the monomial $x_1x_{n'}^2$. But $x_1^2x_{n'}\prec_{n'} x_1x_{n'}^2$, so $x_1^2x_{n'}\not\in\iin_{\preceq_{n'}}(J)$. We conclude that the initial ideals $\iin_{\preceq_n}(J)$ are pairwise distinct. Thus, $J$ has infinitely many distinct initial ideals with respect to arbitrary term orders.
\label{example: infinitely many initial ideals with respect to arbitrary term orders}
\end{bemerkung}

\section{Classification of $\IInc(\NN)$-compatible monomial preorders for $c=1$}\label{section 2}
In this section, we will always assume $c=1$. Following the definition in \cite{kemp}, we call a strict partial order $\prec$ on $R$ or $R_n$ a monomial preorder if it satisfies the following conditions:
\begin{itemize}
\item Multiplicativity: For monomials $f,g,h\in R$ or $R_n$, $f\prec g$ implies $hf\prec hg$.
\item Cancellativeness: For monomials $f,g,h\in R$ or $R_n$, $hf\prec hg$ implies $f\prec g$.
\item Incomparability with respect to $\prec$ is transitive.
\end{itemize}

For every monomial $f\in R_n$ there is $a=(a_1,...,a_n)\in\NN_0^n$ with $f=x^a:=x_1^{a_1}\cdots x_n^{a_n}$. In \cite{kemp} it was shown that for every monomial preorder $\prec$ on $R_n$, there is some $m\in\NN$ and a matrix $M\in\RR^{m\times n}$ such that for monomials $x^a,x^b\in R_n$ we have
\begin{equation*}
x^a\prec x^b\Leftrightarrow M\cdot a<_{\text{lex}}M\cdot b,
\end{equation*}
where $<_{\text{lex}}$ denotes the lexicographic order on $\RR^n$, i.e.
\begin{equation*}
(\lambda_1,...,\lambda_n)<_{\text{lex}}(\mu_1,...,\mu_n)\Leftrightarrow \lambda_{i}<\mu_{i}\text{ for }i=\min\lbrace j\,|\,\lambda_j\neq\mu_j\rbrace.
\end{equation*}
Obviously, one can assume the rows of $M$ to be orthogonal and non-zero (and, consequently, $m\leq n$), and we will do so from now on.

Our goal for this section is to classify the $\IInc(\NN)$-compatible monomial preorders on $R$, i.e. the monomial preorders $\prec$ which additionally satisfy the condition
\begin{equation}
f\prec g\Rightarrow p\cdot f\prec p\cdot g
\label{definition IInc(NN)-compatibility}
\end{equation}
for all monomials $f,g\in R$ and every $p\in\IInc(\NN)$. Our strategy is to first classify the $\IInc(\NN)$-compatible monomial preorders on $R_4$ (the question why we have to use $n=4$ is addressed in Remark \ref{remark: why R_4?}). By shifting variable indices and using Equation (\ref{definition IInc(NN)-compatibility}), we will then be able to deduce from this what the $\IInc(\NN)$-compatible monomial preorders on $R$ are.

\begin{lemma}
Let $M\in\RR^{m\times 4}$ be a matrix representing an $\IInc(\NN)$-compatible monomial preorder $\prec$ on $R_4$. Then there is a real number $\lambda\neq 0$ such that the first row $r_1\in\RR^4$ of $M$ satisfies
\begin{equation}
r_1\in\left\lbrace(\lambda,\lambda,\lambda,\lambda),(\lambda,0,0,0),(0,0,0,\lambda)\right\rbrace.
\label{first row}
\end{equation}
If $r_1=(\lambda,\lambda,\lambda,\lambda)$ and $m\geq 2$, then the second row $r_2$ of $M$ satisfies
\begin{equation}
r_2\in\left\lbrace(-\mu,-\mu,-\mu,3\mu),(3\mu,-\mu,-\mu,-\mu)\right\rbrace
\label{second row}
\end{equation}
for some $\mu\neq 0$.
\end{lemma}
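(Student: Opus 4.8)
The plan is to extract linear constraints on the rows of $M$ from the instances of (\ref{definition IInc(NN)-compatibility}) that actually survive on $R_4$. The first observation is that if either of two monomials $f,g$ involves $x_4$, then the only $p\in\IInc(\NN)$ with $p\cdot f,p\cdot g\in R_4$ is the identity, so compatibility has content only for monomials supported on $\{x_1,x_2,x_3\}$. For these the relevant maps are the three ``gap insertions'' $d_1,d_2,d_3\in\IInc(\NN)$, where $d_i$ fixes $1,\dots,i-1$ and shifts $i,i+1,\dots$ up by one; on monomials in $x_1,x_2,x_3$ they send the index set $\{1,2,3\}$ to $\{2,3,4\}$, $\{1,3,4\}$, $\{1,2,4\}$, all inside $R_4$. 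Thus $\prec$ is $\IInc(\NN)$-compatible exactly when each $d_i$ preserves $\prec$ on the monomials of $\KK[x_1,x_2,x_3]$.

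For the first row $r_1=(w_1,w_2,w_3,w_4)$, which is the dominant weight (so $r_1\cdot a\neq r_1\cdot b$ already decides the comparison of $x^a$ and $x^b$), I would exploit $d_1$. Writing $u=(w_1,w_2,w_3)$ and $u'=(w_2,w_3,w_4)$, the monomial $d_1\cdot x^a$ has $r_1$-weight $u'\cdot a$, so (since $r_1$ is dominant) compatibility gives, for all $a,b\in\NN_0^3$, that $u\cdot a<u\cdot b$ implies $u'\cdot a\leq u'\cdot b$; equivalently $u\cdot d>0\Rightarrow u'\cdot d\geq 0$ for every $d\in\ZZ^3$. By half-space containment this forces $u'=c\,u$ for some $c\geq 0$, i.e. the $w_j$ form a geometric progression $w_j=c^{\,j-1}w_1$. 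The same argument applied to $d_3$ yields $(w_1,w_2,w_4)=c'(w_1,w_2,w_3)$. If $u\neq 0$ then necessarily $w_1\neq 0$, and the two relations force $c'=1$ and hence $c^3=c^2$, so $c\in\{0,1\}$, giving $r_1=(\lambda,\lambda,\lambda,\lambda)$ for $c=1$ and $r_1=(\lambda,0,0,0)$ for $c=0$; if $u=0$ then $r_1=(0,0,0,\lambda)$ with $\lambda\neq 0$ by non-vanishing of the row.

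When $r_1=\lambda\mathbf 1$, every $p\in\IInc(\NN)$ preserves total degree, so the top level is automatically compatible and the analysis descends to the homogeneous components, where $\prec$ is governed by $r_2=(s_1,\dots,s_4)$. Repeating the argument but now only for $d\in\ZZ^3$ with $\mathbf 1\cdot d=0$ (equal-degree comparisons are the only ones shifts can affect), the half-space containment takes place inside the hyperplane $\mathbf 1^{\perp}$ and yields $(s_2,s_3,s_4)=c(s_1,s_2,s_3)+\gamma\mathbf 1$ from $d_1$, together with an analogous relation from $d_3$. If $(s_1,s_2,s_3)$ is not a multiple of $(1,1,1)$, solving these affine recursions forces $s_2=s_3=s_4$, so $s=(s_1,t,t,t)$; if $s_1=s_2=s_3$, then $s=(t,t,t,s_4)$. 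Imposing orthogonality to $r_1=\lambda\mathbf 1$ turns these into $r_2\propto(3,-1,-1,-1)$ and $r_2\propto(-1,-1,-1,3)$ respectively, i.e. the two asserted forms. Here the flat entries are not pinned to $0$, because degree is preserved so no inhomogeneous comparison is available; this is exactly the surviving freedom of adding a multiple of $\mathbf 1$.

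The step I expect to be most delicate is justifying the half-space containments and handling the degenerate cases, rather than the algebra that follows. Concretely, one must argue carefully that $r_1$ (resp. $r_2$ on a fixed degree) is the weight that decides every strict comparison, so that a strict weight inequality transfers to a strict inequality of the full preorder and hence, after shifting, to a \emph{weak} weight inequality; and one must treat separately the cases where the relevant weight vanishes on the visible variables $x_1,x_2,x_3$, since it is exactly this vanishing that produces the ``position $4$'' forms $(0,0,0,\lambda)$ and $(-1,-1,-1,3)$. This also makes transparent why the classification is carried out on $R_4$: one needs three visible variables to run the recursion relating consecutive triples of weights, together with a fourth index as the common target of the shifts $d_1,d_2,d_3$, and with fewer variables the recursion is too short to pin the pattern down.
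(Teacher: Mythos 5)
Your proposal is correct and takes essentially the same approach as the paper: both derive, from compatibility with the index shifts of $\{1,2,3\}$ into $[4]$, that a strict first-row inequality (resp.\ second-row inequality, restricted to degree-zero difference vectors) forces a weak inequality for the shifted weight vector, and then solve the resulting proportionality constraints, with the degenerate cases producing $(0,0,0,\lambda)$ and $(-\mu,-\mu,-\mu,3\mu)$. The differences are only cosmetic: your geometric-progression and affine-recursion bookkeeping replaces the paper's rank-$\leq 1$ conditions on the matrices $A_1,A_2$ and $B_1,B_2$, and you use the shift with image $\{1,2,4\}$ where the paper uses the one with image $\{1,3,4\}$.
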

\begin{proof}
Write $r_1=(a_1,a_2,a_3,a_4)$. Due to the $\IInc(\NN)$-compatibility of $\prec$, for any vector $(v_1,v_2,v_3)\in\ZZ^3$ the inequality $a_1v_1+a_2v_2+a_3v_3>0$ implies $a_2v_1+a_3v_2+a_4v_3\geq 0$ and $a_1v_1+a_3v_2+a_4v_3\geq 0$. Therefore, the matrices
\begin{equation*}
A_1:=\left(\begin{array}{ccc}
a_1&a_2&a_3\\
a_2&a_3&a_4
\end{array}\right),~~
A_2:=\left(\begin{array}{ccc}
a_1&a_2&a_3\\
a_1&a_3&a_4
\end{array}\right)
\end{equation*}
must have rank $\leq 1$. Assume $a_1\neq 0$. Then from $\rank(A_2)\leq 1$ we deduce $a_2=a_3=a_4$, which due to $\rank(A_1)\leq 1$ implies either $a_2=a_3=a_4=0$ or $a_1=a_2$. On the other hand, if $a_1=0$, $\rank(A_1)\leq 1$ implies $a_2=a_3=0$. This proves (\ref{first row}).\newline
Now assume that $r_1=(\lambda,\lambda,\lambda,\lambda)$ and $m\geq 2$. Write $r_2=(b_1,b_2,b_3,b_4)$. Again, the $\IInc(\NN)$-compatibility of $\prec$ implies that if $(v_1,v_2,-v_1-v_2)\in\ZZ^3$ satisfies $b_1v_1+b_2v_2+b_3(-v_1-v_2)>0$, then $b_2v_1+b_3v_2+b_4(-v_1-v_2)\geq 0$ and $b_1v_1+b_3v_2+b_4(-v_1-v_2)\geq 0$. Hence the matrices
\begin{equation*}
B_1:=\left(\begin{array}{cc}
b_1-b_3&b_2-b_3\\
b_2-b_4&b_3-b_4
\end{array}\right),~~
B_2:=\left(\begin{array}{cc}
b_1-b_3&b_2-b_3\\
b_1-b_4&b_3-b_4
\end{array}\right)
\end{equation*}
must have rank $\leq 1$. Assume $b_2-b_3\neq 0$. Then there are $x,y\in\RR$ satisfying $(b_2-b_4,b_3-b_4)=x(b_1-b_3,b_2-b_3)$, $(b_1-b_4,b_3-b_4)=y(b_1-b_3,b_2-b_3)$. As the second columns of $B_1$ and $B_2$ agree, we have $x=y$ and, thus, $b_1=b_2$. But then, $\rank(B_1)\leq 1$ implies $b_2=b_3$, which is a contradiction. We therefore may assume $b_2-b_3=0$. If $b_1-b_3\neq 0$, $\rank(B_1)\leq 1$ then yields $b_3-b_4=0$, so $b_2=b_3=b_4$. On the other hand, if $b_1-b_3=0$, we obtain $b_1=b_2=b_3$. As we assume $r_2$ to be orthogonal to $r_1$, this proves (\ref{second row}).
\end{proof}

\begin{lemma}
Let $i\in\NN$, $n\geq i$ and $\prec$ be a monomial preorder on $R_n$.
\begin{compactenum}[(1)]
\item If $x_1$ and $x_j$ are incomparable for all $j\in[i]$, then any two monomials $f,g\in R_i$ with $\deg(f)=\deg(g)$ are incomparable.
\item If $1$ and $x_j$ are incomparable for all $j\in[i]$, then every pair of monomials $f,g\in R_i$ is incomparable.
\end{compactenum}
\label{lemma incomparability}
\end{lemma}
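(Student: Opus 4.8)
The plan is to work directly from the three defining axioms of a monomial preorder, the one load-bearing preliminary being that incomparability behaves like a congruence. Write $f\sim g$ to mean that the monomials $f$ and $g$ are incomparable under $\prec$. By the third axiom $\sim$ is transitive, and it is automatically reflexive and symmetric, so it is an equivalence relation. I would first record that $\sim$ is compatible with multiplication: if $f\sim g$, then $hf\sim hg$ for every monomial $h$. Indeed, were $hf\prec hg$, cancellativeness would give $f\prec g$, contradicting $f\sim g$; the symmetric possibility $hg\prec hf$ is excluded the same way. This step uses only transitivity and cancellativeness, and it is the observation I would be most careful about, since it is tempting to take multiplicativity of incomparability for granted.

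For part (1) the idea is that the hypothesis $x_1\sim x_j$ for all $j\in[i]$ lets me rewrite any monomial of $R_i$ as a power of $x_1$ of the same degree, one factor at a time. Given $f=x_1^{a_1}\cdots x_i^{a_i}$ with $\deg(f)=d$, I would repeatedly replace a factor $x_j$ (for some $j\geq 2$ with $a_j>0$) by $x_1$: writing $f=h\,x_j$ and combining $x_j\sim x_1$ with the congruence property yields $h\,x_j\sim h\,x_1$. Iterating over all such factors and chaining the resulting incomparabilities through transitivity gives $f\sim x_1^{d}$. Applying the same argument to $g$ gives $g\sim x_1^{d}$, since $\deg(g)=d$ as well, and a final appeal to transitivity yields $f\sim g$.

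Part (2) follows the same template but uses the stronger hypothesis $1\sim x_j$, which now permits changing the degree. From $1\sim x_j$ and the congruence property, $h\sim h\,x_j$ for every monomial $h$, so a factor $x_j$ may simply be deleted while preserving $\sim$. Deleting the factors of $f\in R_i$ one at a time reduces $f$ to $1$, so $f\sim 1$; likewise $g\sim 1$; and transitivity gives $f\sim g$. No degree condition is needed here, which is exactly why (2) is unconditional whereas (1) is restricted to monomials of equal degree.

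The genuine obstacle is therefore concentrated in the congruence property of $\sim$; everything after it is a finite induction on the number of variable factors and is routine. One could alternatively run the whole argument through the matrix representation $x^a\prec x^b\Leftrightarrow M\cdot a<_{\mathrm{lex}} M\cdot b$ of \cite{kemp}: the hypotheses translate into $M e_1=M e_j$ for all $j\in[i]$ in case (1), respectively $M e_j=0$ for all $j\in[i]$ in case (2), and since the exponent vectors are supported on $[i]$ one computes $M\cdot a=\deg(f)\,M e_1$, respectively $M\cdot a=0$, making incomparability immediate. I would nonetheless present the axiomatic version above, as it is shorter and self-contained.
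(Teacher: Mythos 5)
Your proof is correct and follows essentially the same route as the paper: the paper's proof rests on the brief note that products of incomparable pairs are incomparable, uses it to reduce every monomial of $R_i$ to $x_1^{\deg(f)}$ in case (1) and to $1$ in case (2), and finishes by transitivity of incomparability, exactly as you do. Your congruence step $f\sim g\Rightarrow hf\sim hg$, derived from cancellativity, is precisely the single-factor version of that note, so your write-up is if anything slightly more explicit at the one point the paper leaves unproved.
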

\begin{proof}
Note that if $f,g$ and $f',g'$ are two pairs of incomparable monomials in $R_n$, then the pair $ff',gg'$ is incomparable, too. In case (1), this implies that $x_1^{\deg(f)}$ and $f$ are incomparable for every monomial $f\in R_i$; in case (2), we obtain that $1$ and $f$ are incomparable for all monomials $f\in R_i$. Transitivity of incomparability now yields the desired statements.
\end{proof}

For a monomial preorder $\prec$ on any polynomial ring, we denote by $\prec^{-1}$ the inverse of $\prec$, i.e. the monomial preorder which is defined by $f\prec^{-1}g$ $\Leftrightarrow$ $f\succ g$. We call a monomial preorder trivial if every pair of monomials $f,g$ is incomparable. A degree order is a monomial preorder $\prec$ satisfying $\deg(f)<\deg(g)\Rightarrow f\prec g$, and a reverse degree order is a monomial preorder which is the inverse of a degree order.

In what follows, we will write $f\preceq g$ instead of $f\not\succ g$, and we set $R_0:=\KK$.

\begin{proposition}
For monomials $x^a\neq x^b\in R_4$ let $A:=\lbrace j\,|\,a_j\neq b_j\rbrace$. The $\IInc(\NN)$-compatible, non-trivial monomial preorders on $R_4$ are:
\begin{compactenum}[(1)]
\item $\Deg$: $x^a\Deg x^b$ $:\Leftrightarrow$ $\deg(x^a)<\deg(x^b)$;
\item $\Min$: $x^a\Min x^b$ $:\Leftrightarrow$ $a_{\min(A)}<b_{\min(A)}$;
\item $\DegMin$: $x^a\DegMin x^b$ $:\Leftrightarrow$ $\deg(x^a)<\deg(x^b)$ or $(\deg(x^a)=\deg(x^b)$ and $a_{\min(A)}<b_{\min(A)})$;
\item $\RevdegMin$: $x^a\RevdegMin x^b$ $:\Leftrightarrow$ $\deg(x^a)>\deg(x^b)$ or $(\deg(x^a)=\deg(x^b)$ and $a_{\min(A)}<b_{\min(A)})$;
\item $\Maxi$, $i\in[4]$: $x^a\Maxi x^b$ $:\Leftrightarrow$ $\max(A)\geq i$ and $a_{\max(A)}<b_{\max(A)}$;
\item $\DegMaxi$, $i\in\lbrace 2,3,4\rbrace$: $x^a\DegMaxi x^b$ $:\Leftrightarrow$ $\deg(x^a)<\deg(x^b)$ or $(\deg(x^a)=\deg(x^b)$, $\max(A)\geq i$ and $a_{\max(A)}<b_{\max(A)})$;
\item $\RevdegMaxi$, $i\in\lbrace 2,3,4\rbrace$: $x^a\RevdegMaxi x^b$ $:\Leftrightarrow$ $\deg(x^a)>\deg(x^b)$ or $(\deg(x^a)=\deg(x^b)$, $\max(A)\geq i$ and $a_{\max(A)}<b_{\max(A)})$;
\end{compactenum}
and their inverses.
\label{proposition: preorders on R_4}
\end{proposition}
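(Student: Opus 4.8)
The plan is to reconstruct each preorder from its defining matrix $M$ with orthogonal, nonzero rows $r_1,\dots,r_m$, determining the rows one at a time. Since passing to $\prec^{-1}$ replaces $M$ by $-M$ and the list in the statement is closed under inverses, I may assume throughout that the scalar $\lambda$ attached to $r_1$ is positive and adjoin the inverses at the very end. The preceding lemma already restricts $r_1$ to a positive multiple of one of $(1,1,1,1)$, $(1,0,0,0)$, $(0,0,0,1)$, which splits the argument into three cases, and in the first case it also pins $r_2$ down to a sign and to the dichotomy between the first-coordinate vector $(3\mu,-\mu,-\mu,-\mu)$ and the last-coordinate vector $(-\mu,-\mu,-\mu,3\mu)$. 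The rows $r_3,r_4$ are not supplied by that lemma, so the bulk of the work is to force them directly.

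For $r_1=(0,0,0,\lambda)$ I claim $\prec$ is some $\Maxi$. Here $\prec$ compares by $a_4$ first, and I would show, by rank/shift arguments of the same flavour as in the preceding lemma, that each further row is forced in turn to be a \emph{positive} multiple of $e_3$, then $e_2$, then $e_1$. A row of sign opposite to $r_1$, or one that skips a coordinate, is excluded by applying a shift $p\in\IInc(\NN)$ that pushes the relevant index upward (e.g.\ $x_3\mapsto x_4$): such a $p$ turns a strict inequality into a false relation (a tie or a reversal). The only genuine freedom is where to stop: truncating after the rows $e_4,\dots,e_i$ is legitimate, and the monomials then left unresolved — those agreeing in all coordinates $\ge i$ — reduce after cancellation to monomials of $R_{i-1}$ on which $1$ is incomparable to each $x_j$, so Lemma \ref{lemma incomparability}(2) makes them mutually incomparable. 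This is exactly $\Maxi$ with $i=5-m$, and all $i\in[4]$ occur.

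The case $r_1=(\lambda,0,0,0)$ is the asymmetric mirror image: $\prec$ compares by $a_1$ first, but now an up-shift destabilizes any premature stop, since the shift $x_1\mapsto x_2$ migrates the ``large'' first-coordinate mass to a higher coordinate that a truncated matrix no longer controls, again producing a tie where a strict relation is required. Thus the rows are forced to be $e_1,e_2,e_3,e_4$ in order, giving the full lexicographic order $\Min$. The middle case $r_1=(\lambda,\lambda,\lambda,\lambda)$ combines both behaviours once the degree is split off: if $m=1$ then $x_1,\dots,x_4$ are mutually incomparable and Lemma \ref{lemma incomparability}(1) forces $\Deg$; if $r_2$ is of first-coordinate type the degree-refining tiebreak behaves like the $e_1$ case, admits no truncation, and (over the sign of $\mu$ and the inverse reduction) yields $\DegMin$, $\RevdegMin$ and their inverses; and if $r_2$ is of last-coordinate type it behaves like the $e_4$ case, admits cutoffs, and yields $\DegMaxi$ (and $\RevdegMaxi$, with inverses). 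Here the cutoff index cannot reach $1$, because on an equal-degree set the first coordinate is determined by the other three; Lemma \ref{lemma incomparability}(1), applied to the block $R_{i-1}$, supplies the incomparable tail, so $i\in\{2,3,4\}$.

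The hard part will be the forcing of $r_3$ and $r_4$: I must establish directly that (i) the sign of every further row agrees with that of $r_1$, so no ``mixed'' min/max refinement survives, and (ii) min-type refinements admit no cutoff while max-type refinements admit exactly the cutoffs down to index $2$ (degree case) or $1$ (pure case). Both points reduce to a careful but elementary bookkeeping of which $p\in\IInc(\NN)$ keep two prescribed monomials inside $R_4$ and how they displace the leading differing coordinate; the asymmetry between the two directions is precisely that $\IInc(\NN)$ shifts indices upward, so the condition $\max(A)\ge i$ is shift-stable whereas $\min(A)\le i$ is not. Once the rows are pinned down in each case, the converse direction — that every preorder in the list is indeed $\IInc(\NN)$-compatible — follows by the same shift-stability observation, and matching each forced matrix to its named preorder together with the sign/inverse bookkeeping completes the classification.
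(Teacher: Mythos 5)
Your strategy is sound and, as far as I can check, every claim in it is true, but it takes a genuinely different route from the paper, and its decisive steps are announced rather than carried out. The paper never determines $r_3$ or $r_4$ at all: after its lemma pins down $r_1$ (and $r_2$ in the degree case), it abandons the matrix picture and argues order-theoretically. It pulls comparisons back along $p^{-1}$ to upgrade ``compare by $a_1$'' to ``compare by $a_{\min(A)}$'' (settling $\Min$, $\DegMin$ and their variants in one stroke), and in the max-type cases it takes the minimal $i$ with $x_1\prec x_i$ (resp.\ $1\prec x_i$) and combines Lemma \ref{lemma incomparability} with a push-forward contradiction (choose $p$ with $p(j)=4$) to force $\prec=\DegMaxi$ resp.\ $\Maxi$; transitivity of incomparability substitutes for the undetermined lower rows. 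You instead reconstruct the whole matrix, and this does work: for instance, if $r_1=(0,0,0,\lambda)$ with $\lambda>0$, orthogonality gives $r_2=(b_1,b_2,b_3,0)$, and applying the shift $1\mapsto 1$, $2\mapsto 2$, $3\mapsto 4$ to any $v\in\ZZ^3$ with $b_1v_1+b_2v_2+b_3v_3>0$ forces $v_3\geq 0$, so the open halfspace $\lbrace v\,|\,b\cdot v>0\rbrace$ lies in $\lbrace v\,|\,v_3\geq 0\rbrace$ and $r_2$ is a positive multiple of $(0,0,1,0)$; iterating gives $e_2$, then $e_1$, while in the min case shifting $v=e_1$ to $e_{m+1}$ kills every proper truncation. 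One subtlety your ``one row at a time'' phrasing glosses over: the constraints do not separate cleanly by row, since a comparison decided by an earlier row can be shifted to a tie in that row (e.g.\ $1\prec x_1$ becomes a comparison of $1$ with $x_2$), so forcing each row needs both the tie-breaking constraints and these shifted ones --- in the $\Min$ case that is precisely what excludes a second row with zero or negative $e_2$-component. What your approach buys is a mechanical, uniform argument that explains structurally why max-type orders admit cutoffs and min-type do not, and it makes the converse (compatibility of every listed preorder) fall out of the same shift-stability observation, which the paper leaves implicit; what it costs is exactly the bookkeeping you postpone, which is elementary but constitutes the bulk of the proof and would have to be written out in full for this to count as a complete argument.
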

\begin{proof}
Let $\prec$ be an $\IInc(\NN)$-compatible monomial preorder on $R_4$ and let $r_j$ be the $j$th row of a matrix representing it. We first consider the case $r_1=(\lambda,\lambda,\lambda,\lambda)$, $r_2=(3\mu,-\mu,-\mu,-\mu)$ and the case $r_1=(\lambda,0,0,0)$, assuming that $\lambda,\mu>0$. In the first case, $\prec$ is a degree order with the additional property that ($a_1<b_1$ $\Rightarrow$ $x^a\prec x^b$) for monomials $x^a,x^b$ of the same degree. In the second case, this implication is valid for any pair of monomials $x^a,x^b$. Let $x^a,x^b\in R_4$ be monomials such that $a_{\min(A)}<b_{\min(A)}$ and, in the first case, $\deg(x^a)=\deg(x^b)$. We may assume that $a_i=b_i=0$ for $1\leq i<\min(A)$. Choose $p\in\IInc(\NN)$ with $\lbrace\min(A),...,4\rbrace\subseteq p([4])$ and $p(1)=\min(A)$. Then we have $p^{-1}\cdot x^a\prec p^{-1}\cdot x^b$, and by $\IInc(\NN)$-compatibility we conclude that this relation holds for $x^a$ and $x^b$, too. Thus, we obtain $\prec=\DegMin$ in the first and $\prec=\Min$ in the second case. If $\lambda<0$ or $\mu<0$, an analogous argument shows that in the first case, $\prec$ is one of the monomial preorders $(\RevdegMin)^{-1},\RevdegMin,(\DegMin)^{-1}$, and in the second case, we have $\prec=(\Min)^{-1}$.

Now assume that $r_1=(\lambda,\lambda,\lambda,\lambda)$ and $r_2=(-\mu,-\mu,-\mu,3\mu)$ with $\lambda,\mu>0$ (as above, the cases $\lambda<0$ or $\mu<0$ can be dealt with similarly). Then again, $\prec$ is a degree order, and for monomials $x^a,x^b$ with $\deg(x^a)=\deg(x^b)$ we have ($a_4<b_4$ $\Rightarrow$ $x^a\prec x^b$). By $\IInc(\NN)$-compatibility, the relation $x_1\preceq x_i$ holds for all $i\in[4]$. Let $i\in\lbrace 2,3,4\rbrace$ be minimal such that $x_1\prec x_i$, and let $f\in R_{i-1}$, $g\in R_i\setminus R_{i-1}$ be any monomials with $\deg(f)=\deg(g)$. Writing $g=g_1x_i^{e}$ with $g_1\in R_{i-1}$ and $f=f_1f_2$ such that $\deg(f_1)=\deg(g_1)$, Lemma \ref{lemma incomparability}(1) tells us that $f_1$ and $g_1$ are incomparable and $x_i^e\succ f_2$, hence we obtain $f\prec g$. Now let $i<j\leq 4$ and $f\in R_{j-1}$, $g\in R_j\setminus R_{j-1}$ be monomials of the same degree. Suppose that $f$ and $g$ are incomparable. As by $\IInc(\NN)$-compatibility we have $x_1\prec x_{j-1}$, this yields $x_1g\prec x_{j-1}f$. Let $p\in\IInc(\NN)$ be any function satisfying $p(j)=4$. Then by $\IInc(\NN)$-compatibility, we have $p\cdot x_1g\prec p\cdot x_{j-1}f$, which is a contradiction. Thus, Lemma \ref{lemma incomparability}(1) and the $\IInc(\NN)$-compatibility of $\prec$ yield $\prec=\DegMaxi$.

Finally, let $r_1=(0,0,0,\lambda)$ and assume that $\lambda>0$. Then $\prec$ satisfies ($a_4<b_4$ $\Rightarrow$ $x^a\prec x^b$), so in particular $1\prec x_4$ and, thus, by $\IInc(\NN)$-compatibility $1\preceq x_i$ for all $i\in[4]$. Let $i\in[4]$ be minimal such that $1\prec x_i$ and let $f\in R_{i-1}$, $g=g_1x_i^{e}$ with $g_1\in R_{i-1}$ and $e>0$ be any monomials. By Lemma \ref{lemma incomparability}(2), $f$ and $g_1$ are incomparable, so we obtain $f\prec g$. Now let $i<j\leq 4$ and $f\in R_{j-1}$, $g\in R_j\setminus R_{j-1}$. Supposing that $f$ and $g$ are incomparable, we obtain $g\prec x_{j-1}f$. Arguing as in the paragraph above, this contradicts the $\IInc(\NN)$-compatibility of $\prec$. Hence, Lemma \ref{lemma incomparability}(2) and the $\IInc(\NN)$-compatibility of $\prec$ let us conclude that $\prec=\Maxi$.
\end{proof}

\begin{bemerkung}
For $n=2$ and $n=3$ there is an infinite number of $\IInc(\NN)$-compatible monomial preorders on $R_n$: For $n=2$, choose any irrational number $\lambda>0$. Then the matrix
\begin{equation*}
A(\lambda):=\left(\begin{array}{cc}
1&\lambda
\end{array}\right)
\end{equation*}
defines an $\IInc(\NN)$-compatible term order on $R_2$, and if $0<\lambda'\neq \lambda$ is another irrational number, the term orders represented by $A(\lambda)$ and $A(\lambda')$ are distinct.\newline
For $n=3$, let $\lambda>1$ be any irrational number and consider the matrix
\begin{equation*}
B(\lambda):=\left(\begin{array}{ccc}
1&1&1\\
1+\lambda&-1&-\lambda
\end{array}\right).
\end{equation*}
Then one can easily check that $B(\lambda)$ represents an $\IInc(\NN)$-compatible term order on $R_3$, and for distinct irrational numbers $\lambda,\lambda'>1$, the term orders represented by $B(\lambda)$ and $B(\lambda')$ are distinct, too.
\label{remark: why R_4?}
\end{bemerkung}

\begin{satz}
The $\IInc(\NN)$-compatible monomial preorders on $R$ are the same as on $R_4$, with the exception that the number $i$ used in the definitions of preorders (5), (6) and (7) can take arbitrary values in $\NN$. In particular, there are only six $\IInc(\NN)$-compatible term orders on $R$, namely $\Min$, $\DegMin$, $(\RevdegMin)^{-1}$, $\MaxEins$, $\DegMaxZwei$, $(\RevdegMaxZwei)^{-1}$.
\label{theorem: preorders on R}
\end{satz}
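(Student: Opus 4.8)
The plan is to upgrade the $R_4$-classification of Proposition \ref{proposition: preorders on R_4} to all of $R$ in two stages, localization and globalization, and then to read off the term orders at the end. For the localization, note first that any comparison $f\prec g$ in $R$ takes place inside some $R_n$, and that the restriction $\prec|_{R_n}$ is again an $\IInc(\NN)$-compatible monomial preorder on $R_n$: multiplicativity, cancellativeness and transitivity of incomparability are inherited, and the compatibility condition for those $p\in\IInc(\NN)$ preserving $R_n$ is immediate from compatibility on $R$. Since $\prec|_{R_m}=(\prec|_{R_n})|_{R_m}$ for $m\le n$, knowing all the restrictions $\prec|_{R_n}$ is the same as knowing $\prec$, so it suffices to classify the $\IInc(\NN)$-compatible monomial preorders on $R_n$ for every $n\ge 4$ and then to check how these fit together as $n$ grows.

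First I would re-run the proof of Proposition \ref{proposition: preorders on R_4} with $4$ replaced by an arbitrary $n\ge 4$ (the bound $n\ge4$ being necessary, cf. Remark \ref{remark: why R_4?}). The rank-$\le 1$ argument in the Lemma on the first two rows of $M$ inspects only three columns at a time, so it carries over and forces the first row $r_1$ of a representing matrix to be $(\lambda,\dots,\lambda)$, $(\lambda,0,\dots,0)$ or $(0,\dots,0,\lambda)$, and in the degree case forces the second row to be $(-\mu,\dots,-\mu,(n-1)\mu)$ or $((n-1)\mu,-\mu,\dots,-\mu)$. The remaining analysis, combining Lemma \ref{lemma incomparability} with shifts $p\in\IInc(\NN)$ that send the window $[n]$ to itself or push a chosen index to the top, then reproduces the very same list, now with the threshold $i$ allowed to range over $[n]$ for the $\Maxi$-type and over $\{2,\dots,n\}$ for the $\DegMaxi$- and $\RevdegMaxi$-types. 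Thus, for each $n\ge4$, the $\IInc(\NN)$-compatible non-trivial monomial preorders on $R_n$ are precisely the evident $n$-variable analogues of (1)--(7) and their inverses.

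Next I would glue. The degree behaviour (degree order, reverse degree order, or degree-blind) and the Min/Max flavour are invariants already visible in $R_4$ and stable under $n\mapsto n+1$, since $\prec|_{R_n}$ is the restriction of $\prec|_{R_{n+1}}$ and restriction carries each $R_{n+1}$-type to the $R_n$-type of the same flavour, or to the trivial or pure-degree preorder once the threshold exceeds $n$. Within the Max-flavour the one genuinely global datum is the threshold: the sets $\{j\mid 1\prec x_j\}$ and $\{j\mid 1\succ x_j\}$ in the non-degree case, respectively the set of $j$ with $x_1\prec x_j$ among equal degrees in the degree case, are upward closed under the shifts $p$ with $p(j)=j'$, so there is a least index $i^\ast\in\NN\cup\{\infty\}$ at which comparability sets in, and this $i^\ast$ is exactly the parameter $i$ of the matching $\Maxi$-, $\DegMaxi$- or $\RevdegMaxi$-preorder. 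The Min-flavour and the pure (reverse) degree orders carry no such parameter, and the degenerate value $i^\ast=\infty$ collapses the Max-flavour into $\Deg$, its inverse, or the trivial preorder. Feeding $i^\ast$ back into the $R_n$-lists and taking the union over all $n$ then identifies $\prec$ with exactly one preorder from the $R_4$-list, now with $i$ allowed to range over $\NN$.

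Finally, to extract the six term orders I would test which entries are total and satisfy $1\preceq f$. The orders $\Deg$, every $\Maxi$ with $i\ge2$, every $\DegMaxi$ and $\RevdegMaxi$ with $i\ge3$, and their inverses leave a pair incomparable (two monomials of equal degree for $\Deg$, and any pair with $\max(A)<i$ for the others), so they are not term orders; among the totally ordered candidates exactly $\Min,\DegMin,(\RevdegMin)^{-1},\MaxEins,\DegMaxZwei,(\RevdegMaxZwei)^{-1}$ respect $1\preceq f$, giving the claimed count of six. The main obstacle is precisely the globalization: large thresholds are invisible in $R_4$ — for $i\ge5$ the restriction $\prec|_{R_4}$ is trivial, a pure degree order, or a pure reverse degree order — so the crux of the argument is genuinely the $n$-variable re-proof of Proposition \ref{proposition: preorders on R_4} together with the verification that the threshold $i^\ast$ is the unique new global parameter the gluing can create.
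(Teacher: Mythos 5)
Your two-stage plan (classify the $\IInc(\NN)$-compatible monomial preorders on $R_n$ for every $n\ge 4$, then glue along restrictions) is coherent, and both the gluing step and the final extraction of the six term orders are essentially correct. The genuine gap sits exactly where you yourself place the crux and then dispose of it in one clause: the claim that the proof of Proposition \ref{proposition: preorders on R_4} ``carries over'' to general $n$ because the rank argument ``inspects only three columns at a time.'' That is not a proof, and the three-columns-at-a-time structure is precisely why it fails as a justification: the matrices $A_1,A_2,B_1,B_2$ in the paper's lemma encode only the shifts $(1,2,3)\mapsto(2,3,4)$ and $(1,2,3)\mapsto(1,3,4)$, so their literal analogues in $R_n$ involve the first four columns only and constrain nothing beyond them. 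Concretely, on $R_5$ take $r_1=(1,1,1,1,1)$ and $r_2=(2,0,0,0,-2)$. This $r_2$ is orthogonal to $r_1$, the analogues of $B_1$ and $B_2$ become $\left(\begin{smallmatrix}2&0\\0&0\end{smallmatrix}\right)$ and $\left(\begin{smallmatrix}2&0\\2&0\end{smallmatrix}\right)$, which have rank $1$, and in fact every condition coming from a shift whose source and target both lie among four \emph{consecutive} indices is satisfied: after the harmless orthogonalization of the second row against the first, the restrictions to the windows $\{1,2,3,4\}$ and $\{2,3,4,5\}$ have the admissible shapes $\tfrac12(3,-1,-1,-1)$ and $-\tfrac12(-1,-1,-1,3)$ from (\ref{second row}). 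Yet $r_2$ is not of the shape $((n-1)\mu,-\mu,\dots,-\mu)$ or $(-\mu,\dots,-\mu,(n-1)\mu)$ that your generalized lemma asserts, and it is indeed impossible, but only because of long-range constraints: here $x_2^2\prec x_1x_3$ (second-row values $0<2$), while for $p$ with $(p(1),p(2),p(3))=(3,4,5)$ compatibility would force $x_4^2\prec x_3x_5$, whereas the matrix gives $x_3x_5\prec x_4^2$ (values $-2<0$). Conditions of this kind, whose source and target together span the whole index range, have no counterpart in the four-variable proof; selecting them and redoing the linear algebra (for the first row, the second row, and the subsequent case analysis) for arbitrary $n$ is the real content of your route, and it is missing.

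This gap is avoidable, and the paper's own proof shows how; this also refutes your closing assertion that the $n$-variable re-proof is ``genuinely the crux.'' The paper never classifies preorders on $R_n$ for any $n>4$. It shows directly from $\IInc(\NN)$-compatibility on $R$ that $x_i\preceq x_j$ whenever $i\le j$, whence every monomial $f$ is sandwiched as $x_{m(f)}^{\deg(f)}\preceq f\preceq x_{M(f)}^{\deg(f)}$, with the refined two-variable bounds $x_{m(f)}^{\deg(f)-E(f)}x_{M(f)}^{E(f)}\preceq f$ and $x_{m(f)}^{e(f)}x_{M(f)}^{\deg(f)-e(f)}\succeq f$. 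Any desired comparison $f\prec g$ in $R$ thereby reduces to a comparison of monomials involving at most three variable indices, and a suitable $p^{-1}$, $p\in\IInc(\NN)$, moves those indices into a window where either the classified restriction $\prec|_{R_4}$ or Lemma \ref{lemma incomparability} decides the issue. So to complete your argument you must either genuinely carry out the general-$n$ classification, including the long-range shift constraints exhibited above, or replace that step by this sandwich reduction to $R_4$.
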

\begin{proof}
Let $\prec$ be an $\IInc(\NN)$-compatible monomial preorder on $R$. Note that by $\IInc(\NN)$-compatibility, we either have $x_1\preceq x_i$ for all $i\in\NN$ or $x_1\succeq x_i$ for all $i\in\NN$. We will only consider the former case. By Lemma \ref{lemma incomparability}(1), if $x_1$ and $x_i$ are incomparable for all $i$, then so are $x_i$ and $x_j$ for every pair of natural numbers $i,j$. On the other hand, if $i\in\NN$ is minimal such that $x_1\prec x_i$, then Lemma \ref{lemma incomparability}(1) yields $x_{i-1}\prec x_i$, so by $\IInc(\NN)$-compatibility we obtain $x_k\prec x_l$ for all $l\geq i$ and $k<l$. Thus, in any case we have $x_i\preceq x_j$ for all $i\leq j$.

We will use the following notation: For a monomial $f\in R\setminus\lbrace 1\rbrace$, let $m(f)$ and $M(f)$ denote the minimal or, respectively, maximal index of a variable occurring in $f$. By $e(f)$ and $E(f)$ we denote the exponents of these variables in $f$. For $f=1$, we set $m(f)=\infty$, $M(f)=0$ and define $x_{0}:=x_{\infty}:=1$. By the above observation, we have $x_{m(f)}^{\deg(f)}\preceq f$, $x_{M(f)}^{\deg(f)}\succeq f$, $x_{m(f)}^{\deg(f)-E(f)}x_{M(f)}^{E(f)}\preceq f$ and $x_{m(f)}^{e(f)}x_{M(f)}^{\deg(f)-e(f)}\succeq f$. Therefore, in order to show that for any monomials $f,g\in R$ the relation $f\prec g$ holds, it suffices to show one of the following relations:
\begin{compactenum}[-]
\item $x_{M(f)}^{\deg(f)}\prec x_{m(g)}^{\deg(g)}$;
\item $x_{m(f)}^{e(f)}x_{M(f)}^{\deg(f)-e(f)}\prec x_{m(g)}^{\deg(g)}$;
\item $x_{M(f)}^{\deg(f)}\prec x_{m(g)}^{\deg(g)-E(g)}x_{M(g)}^{E(g)}$.
\end{compactenum}

In the remainder of the proof, we will regard each element $p\in\IInc(\NN)$ as a strictly increasing function $p:\NN\cup\lbrace 0,\infty\rbrace\rightarrow\NN\cup\lbrace 0,\infty\rbrace$ by setting $p(0):=0$ and $p(\infty):=\infty$. For any natural number $n$, let $[n]_0:=[n]\cup\lbrace 0\rbrace$ and $[n]_{\infty}:=[n]\cup\lbrace\infty\rbrace$.

Let $\prec'$ be the restriction of $\prec$ to $R_4$. We will first show that if $\prec'$ is a degree or a reverse degree order, then the same holds for $\prec$. Assume that $f'\prec' g'$ whenever $\deg(f')<\deg(g')$ and let $f,g\in R$ be any monomials satisfying $\deg(f)<\deg(g)$. Choose $p\in\IInc(\NN)$ with $M(f),m(g)\in p([2]_0)$. Then we have $p^{-1}\cdot x_{M(f)}^{\deg(f)}\prec p^{-1}\cdot x_{m(g)}^{\deg(g)}$, which by $\IInc(\NN)$-compatibility of $\prec$ implies that $x_{M(f)}^{\deg(f)}\prec x_{m(g)}^{\deg(g)}$ and, thus, $f\prec g$. If $f'\prec' g'$ whenever $\deg(f')>\deg(g')$, an analogous argument shows that $\prec$, too, satisfies this condition, and we are done.

Assume that $\prec'$ is not a total order. By Proposition \ref{proposition: preorders on R_4}, this implies that $\prec'$ is either a degree or a reverse degree order or $1$ and $x_1$ are incomparable.\newline
Suppose that $x_1$ and $x_i$ are incomparable for all $i\in\NN$. Then Lemma \ref{lemma incomparability} implies that either $\prec\in\lbrace\Deg,(\Deg)^{-1}\rbrace$ or, using the transitivity of incomparability, $\prec$ is trivial.\newline
Now let $i\in\NN$ be minimal such that $x_1\prec x_i$. If $i=2$, Proposition \ref{proposition: preorders on R_4} yields $\prec'=\MaxZwei$. Let $j\geq 5$ and $f\in R_{j-1}$, $g\in R_j\setminus R_{j-1}$ and choose $p\in\IInc(\NN)$ with $p(3)=M(g)$ and $M(f),m(g)\in p([3]_0)$. Then $p^{-1}\cdot x_{M(f)}^{\deg(f)}\prec p^{-1}\cdot x_{m(g)}^{\deg(g)-E(g)}x_{M(g)}^{E(g)}$, so, by $\IInc(\NN)$-compatibility, $x_{M(f)}^{\deg(f)}\prec x_{m(g)}^{\deg(g)-E(g)}x_{M(g)}^{E(g)}$. We conclude that $f\prec g$ and obtain $\prec=\MaxZwei$. Now assume $i>2$ and $\prec$ to be a degree or a reverse degree order. Choose any $j\geq i$ and monomials $f\in R_{j-1}$, $g\in R_j\setminus R_{j-1}$ with $\deg(f)=\deg(g)=:D$. Let $p\in\IInc(\NN)$ be any function satisfying $p(i)=M(g)$, $M(f),m(g)\in p([i]_0)$. Then by Lemma \ref{lemma incomparability}(1), $p^{-1}\cdot x_{M(f)}^{D-E(g)}$ and $p^{-1}\cdot x_{m(g)}^{D-E(g)}$ are incomparable and $p^{-1}\cdot x_{M(f)}^{E(g)}\prec p^{-1}\cdot x_{M(g)}^{E(g)}$, so $p^{-1}\cdot x_{M(f)}^{D}\prec p^{-1}\cdot x_{m(g)}^{D-E(g)}x_{M(g)}^{E(g)}$. By $\IInc(\NN)$-compatibility, we conclude $f\prec g$, which implies $\prec=\DegMaxi$ or $\prec=\RevdegMaxi$. If $i>2$ and $\prec$ is neither a degree nor a reverse degree order, $\prec'$ cannot be a degree or a reverse degree order, either, hence $1$ and $x_1$ are incomparable. Thus, choosing again any $j\geq i$ and arbitrary monomials $f\in R_{j-1}$, $g\in R_j\setminus R_{j-1}$ and letting $p\in\IInc(\NN)$ be as above, Lemma \ref{lemma incomparability}(2) yields incomparability of $p^{-1}\cdot x_{M(f)}^{\deg(f)}$ and $p^{-1}\cdot x_{m(g)}^{\deg(g)-E(g)}$ and we conclude $p^{-1}\cdot x_{M(f)}^{\deg(f)}\prec p^{-1}\cdot x_{m(g)}^{\deg(g)-E(g)}x_{M(g)}^{E(g)}$ and therefore $f\prec g$. This implies $\prec=\Maxi$.

Finally, suppose that $\prec'$ is a total order. If $\prec'\in\lbrace\MaxEins,\DegMaxZwei,\RevdegMaxZwei\rbrace$, let $f,g\in R$ be any monomials satisfying $M(g)>M(f)$ and, if $\prec'=\DegMaxZwei$ or $\prec'=\RevdegMaxZwei$, $\deg(f)=\deg(g)$. Let $p\in\IInc(\NN)$ be such that $M(f),m(g),M(g)\in p([3]_0)$. Then $p^{-1}\cdot x_{M(f)}^{\deg(f)}\prec p^{-1}\cdot x_{m(g)}^{\deg(g)-E(g)}x_{M(g)}^{E(g)}$, yielding $f\prec g$ and, thus, $\prec\in\lbrace\MaxEins,\DegMaxZwei,\RevdegMaxZwei\rbrace$. On the other hand, if $\prec'\in\lbrace(\Min)^{-1},(\DegMin)^{-1},(\RevdegMin)^{-1}\rbrace$, choose any monomials $f,g\in R$ with $m(f)<m(g)$ and, if $\prec'=(\DegMin)^{-1}$ or $\prec'=(\RevdegMin)^{-1}$, $\deg(f)=\deg(g)$. Let $p\in\IInc(\NN)$ be such that $m(f),M(f),m(g)\in p([3]_\infty)$. Then $p^{-1}\cdot x_{m(f)}^{e(f)}x_{M(f)}^{\deg(f)-e(f)}\prec p^{-1}\cdot x_{m(g)}^{\deg(g)}$, so $f\prec g$ and, therefore, $\prec\in\lbrace(\Min)^{-1},(\DegMin)^{-1},(\RevdegMin)^{-1}\rbrace$.
\end{proof}

\begin{bemerkung} Whereas by Corollary \ref{corollary: finitely many initial ideals}, the number of initial ideals of any $\IInc(\NN)$-invariant ideal in $R$ with respect to $\IInc(\NN)$-compatible term orders is finite, this is not true for the number of initial ideals with respect to $\IInc(\NN)$-compatible monomial preorders. Let $J$ be as in Remark \ref{example: infinitely many initial ideals with respect to arbitrary term orders} and choose any numbers $n<n'$. Then $x_1x_{n}^2=\iin_{\prec_{\mathrm{max,n}}}(x_1^2x_{n}+x_1x_{n}^2)\in\iin_{\prec_{\mathrm{max,n}}}(J)$. On the other hand, any polynomial $f\in J$ either contains both $x_1^2x_n$ and $x_1x_n^2$ or neither of the two monomials. As $x_1^2x_n$ and $x_1x_n^2$ are incomparable with respect to $\prec_{\mathrm{max,n'}}$, this implies that every element from $\iin_{\prec_{\mathrm{max,n'}}}(J)$ containing $x_1x_n^2$ must also contain $x_1^2x_n$ and, hence, $x_1x_n^2\not\in\iin_{\prec_{\mathrm{max,n'}}}(J)$. We conclude that the initial ideals $\iin_{\prec_{\mathrm{max,n}}}(J)$, $n\in\NN$, are pairwise distinct.
\label{example: infinite number of initial ideals with respect to Inc-preorders}
\end{bemerkung}


\end{document}